\documentclass[a4paper, 10pt]{amsart}
\usepackage{amssymb,amsthm,amstext,amsmath,amscd,latexsym,amsfonts,amscd}
\usepackage{color}
%= ページの高さ、幅（デフォルトは幅狭）=============================
%\textwidth=170mm
\textheight=240mm
\topmargin=-2mm
%===================================================

%= theoremstyle =========================================
\newtheorem{theorem}{Theorem}[section]
\newtheorem{lemma}[theorem]{Lemma}
\newtheorem{proposition}[theorem]{Proposition}
\newtheorem{corollary}[theorem]{Corollary}

\theoremstyle{definition}
\newtheorem{definition}[theorem]{Definition}

\newtheorem*{Question}{Question}

\theoremstyle{remark}
\newtheorem{remark}[theorem]{Remark}

%====================================================

%=newcommand===========================================
\newcommand{\Fb}{\mathbb F}

\newcommand{\C}{\mathcal C}
\newcommand{\X}{\mathcal X}
\newcommand{\R}{\mathcal R}

\newcommand{\fa}{\mathfrak a}
\newcommand{\fb}{\mathfrak b}
\newcommand{\m}{\mathfrak m}
\newcommand{\p}{\mathfrak p}
\newcommand{\q}{\mathfrak q}

\newcommand{\syz}{\Omega}

\newcommand{\add}{\mathrm {add}}
\newcommand{\Spec}{\mathrm {Spec}}
\newcommand{\Supp}{\mathrm {Supp}}
\newcommand{\Min}{\mathrm {Min}}
\newcommand{\height}{\mathrm {ht}}
\newcommand{\grade}{\mathrm {grade}}
\newcommand{\depth}{\mathrm {depth}}
\newcommand{\CMd}{\mathrm {CM}\text{-}\mathrm{dim}}

\newcommand{\Rmod}{R\text{-}\mathrm{mod}}

\newcommand{\rmid}[1]{\mathrel{}\middle#1\mathrel{}}

%======================================================

%>>>>>>>>>>>>>>>>>>>>>>>>>>>>>>>>>>>>>>>>>>>>>>>>>>>>>>>>>>>>>>>>>>
\begin{document}
%\title[Annihilators of local cohomology modules]{Annihilators of local cohomology modules and dominant resolving subcategories} %[short title]{main title}
\title[Annihilators of local cohomology modules]{Annihilators of local cohomology modules via a classification theorem of the dominant resolving subcategories} %[short title]{main title}
%\title[Annihilators of local cohomology modules]{Annihilators of local cohomology modules via a classification of resolving subcategories} %[short title]{main title}
%\title[Annihilators of local cohomology modules]{Annihilators of local cohomology modules via dominant  resolving subcategories} %[short title]{main title}

%=Information for first author================================================
\author{Takeshi Yoshizawa} 
\address{National Institute of Technology, Toyota College, 2-1 Eiseicho, Toyota, Aichi, Japan, 471-8525 }
\email{tyoshiza@toyota-ct.ac.jp}
\subjclass[2010]{13C60, 13D45} %13C60=Module categories %13D45= Local cohomology  
\keywords{Local cohomology module, resolving subcategory}
%===================================================================
%
%
%
%
%
%
%=abstract=============================================================
\begin{abstract}%59 
This paper investigates when local cohomology modules have an annihilator that does not depend on the choice of an ideal. 
Takahashi classified the dominant resolving subcategories of the category of finitely generated modules over a commutative Noetherian ring. 
We show that his classification theorem describes annihilation results of local cohomology modules over a finite-dimensional ring with certain assumptions or a Cohen-Macaulay ring. 
\end{abstract}
%======================================================= =============
%
%
%
\maketitle
%
%
%
%=introduction============================================================
\section{Introduction}
Throughout this paper, let $R$ be a commutative Noetherian ring. 

The vanishing of local cohomology modules has been widely studied in local cohomology theory. 
Grothendieck's vanishing theorem \cite[Theorem 6.1.2]{BS-1998} and the Lichtenbaum-Hartshorne vanishing theorem \cite[Theorem 8.2.1]{BS-1998} are perfect examples of indispensable vanishing results. 

Vanishing phenomena of local cohomology modules can also be seen with the help of other factors. 
In one example, Faltings' annihilator theorem \cite{F-1978} states that some power of the second ideal annihilates local cohomology modules over a homomorphic image of a regular ring; see also \cite[Theorem 9.5.1]{BS-1998}.  
In other examples, Raghavan established the uniform annihilation theorem over a homomorphic image $R$ of a biequidimensional regular ring with a finite dimension; see \cite[Theorem 3.1]{R-1992}. 
His theorem guarantees that there exists an integer $n$ (depending only on a finitely generated $R$-module $M$) such that $\fb^n H^{i}_{\fa}(M)=0$ for all ideals $\fa$, $\fb$ of $R$ and all integers $i<\lambda^{\fb}_{\fa}(M)=\inf\left\{ \depth\, M_{\p}+\height(\fa+\p)/\p \mid \p \in \Spec(R)\setminus V(\fb) \right\}$. 
In particular, the ideal $\fb^n$ has some annihilator $s$ such that $s H_{\fa}^{i}(M)=0$ for all ideals $\fa$ of $R$ and all integers $i<\lambda^{\fb}_{\fa}(M)$. 
%the {\it $\fb$-minimum $\fa$-adjusted depth $\lambda_{\fa}^{\fb}(M)$ of $M$}

In 2006, Zhou showed that, if a locally equidimensional ring $R$ of finite positive dimension is a homomorphic image of a Cohen-Macaulay ring of finite dimension or an excellent local ring, then $R$ has a uniform local cohomological annihilator; see \cite[Corollary 3.3]{Z-2006}. 
A uniform local cohomological annihilator is an element $s \in R \setminus (\cup_{\p \in \Min(R)}\, \p)$ such that, for each maximal ideal $\m$ of $R$, one has $sH^{i}_{\m}(R)=0$ for all integers $i < \height\, \m$. 
It should be noted that these annihilators do not depend on the choice of the maximal ideal.

%10
%------
The purpose of this paper is to study vanishing phenomena of local cohomology modules. 
We start by focusing on \cite[Lemma 9.4.3]{BS-1998}, which relates the difficult part of a proof of Faltings' annihilator theorem; see \cite[Chapter 9.4]{BS-1998}. 
This lemma is concerned with the fact that, if $M_{\p}$ is a non-zero free $R_{\p}$-module for a finitely generated $R$-module $M$ and $\p \in \Spec(R)$, then there exists an element $s \in R \setminus \p$ such that $sH^{i}_{\fa}(M)=0$ for all ideals $\fa$ of $R$ and all integers $i <\grade(\fa, R)$. 

In this regard, the paper deals with the following question as to when a similar vanishing phenomenon occurs. 
It goes without saying that the interesting case is when a prime ideal $\p$ belongs to $\Supp_{R}(M)$; see also \cite[Lemma 9.4.1]{BS-1998}.  

\begin{Question}
Let $M$ be a finitely generated $R$-module and let $\p \in \Spec(R)$. 
When does there exist an element $s \in R \setminus \p$ such that $sH^{i}_{\fa}(M)=0$ for all ideals $\fa$ of $R$ and all  integers $i<\grade(\fa, R)$? 
\end{Question}

Our strategy stems from Dao and Takahashi's classification theorem of the dominant resolving subcategories of the category of finitely generated modules over a Cohen-Macaulay ring; see \cite[Theorem 1.4]{DT-2015}. 
Furthermore, it should be noted that Takahashi recently classified such subcategories without the Cohen-Macaulayness of rings; see \cite[Theorem 5.4]{T-2020}.  
These theorems suggest that dominant resolving subcategories have a close relationship with the existence of annihilators of local cohomology modules. 
Indeed, using \cite[Theorem 1.4]{DT-2015}, we provided an answer to the above question in a finite-dimensional Cohen-Macaulay ring; see \cite[Theorem 4.3]{Y-2020}.

%10
%-----
One of our aims is to establish the following theorem, which shows that \cite[Theorem 5.4]{T-2020} removes the assumption of finite dimension from \cite[Theorem 4.3]{Y-2020}. 

\begin{theorem}\label{theorem-A}\setlength{\leftmargini}{18pt} 
Suppose that $R$ is a Cohen-Macaulay ring, and let $\p \in \Spec(R)$. 
Then, for a finitely generated $R$-module $M$, the following conditions are equivalent.  
\begin{enumerate}
\item There exists an element $s \in R \setminus \p$ such that $sH_{\fa}^{i}(M)=0$ for all ideals $\fa$ of $R$ and all integers $i < \grade(\fa, R)$. 

%-----
\item The $R_{\p}$-module $M_{\p}$ is a maximal Cohen-Macaulay $R_{\p}$-module. 
\end{enumerate}
\end{theorem}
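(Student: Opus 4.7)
The plan is to mirror the argument of \cite[Theorem 4.3]{Y-2020}, invoking Takahashi's more general classification \cite[Theorem 5.4]{T-2020} in place of \cite[Theorem 1.4]{DT-2015}, thereby removing the finite-dimensional hypothesis. The implication $(1) \Rightarrow (2)$ I would dispatch by direct localization: specializing the annihilation condition to $\fa = \p$ and using the flat base change isomorphism $H^{i}_{\p}(M)_{\p} \cong H^{i}_{\p R_{\p}}(M_{\p})$, the image $s/1$ is a unit in $R_{\p}$ annihilating $H^{i}_{\p R_{\p}}(M_{\p})$ for every $i < \grade(\p, R)$. Since $R$ is Cohen-Macaulay, $\grade(\p, R) = \height \p = \dim R_{\p}$, so $H^{i}_{\p R_{\p}}(M_{\p}) = 0$ for all $i < \dim R_{\p}$; the standard vanishing characterization of depth then yields $\depth_{R_{\p}} M_{\p} \geq \dim R_{\p}$, forcing equality when $M_{\p} \neq 0$, whence $M_{\p}$ is maximal Cohen-Macaulay.

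The substantive direction $(2) \Rightarrow (1)$ I would attack by introducing the full subcategory
\[
\mathcal{Y} = \left\{ M \in \Rmod \rmid{|} \exists\, s \in R \setminus \p \text{ with } sH^{i}_{\fa}(M) = 0 \text{ for all ideals } \fa \text{ and all } i < \grade(\fa, R) \right\}
\]
and verifying that $\mathcal{Y}$ is a dominant resolving subcategory of $\Rmod$. Containment of $R$ follows from the standard vanishing $H^{i}_{\fa}(R) = 0$ for $i < \grade(\fa, R)$ (take $s = 1$); closure under direct summands is trivial; closure under extensions follows from the long exact sequence of local cohomology upon multiplying the two annihilators (the product remains outside the prime $\p$). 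For closure under syzygies, the short exact sequence $0 \to \syz M \to F \to M \to 0$ with $F$ free gives, via $H^{i}_{\fa}(F) = 0$ throughout the range, an isomorphism $H^{i-1}_{\fa}(M) \cong H^{i}_{\fa}(\syz M)$ for $1 \leq i < \grade(\fa, R)$ together with $H^{0}_{\fa}(\syz M) = 0$ when $\grade(\fa, R) > 0$, so the same $s$ that worked for $M$ works for $\syz M$.

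In parallel, the subcategory $\X = \{ M \in \Rmod : M_{\p} \text{ is MCM (or zero) over } R_{\p} \}$ is also dominant resolving, since over the Cohen-Macaulay local ring $R_{\p}$ the MCM modules are closed under extensions and first syzygies. Takahashi's classification then attaches to each of $\mathcal{Y}$ and $\X$ a structural datum on $\Spec R$, and I would compare these data to conclude $\mathcal{Y} = \X$, which delivers $(2) \Rightarrow (1)$. The main obstacle is precisely this comparison: verifying the \emph{dominant} condition of \cite[Theorem 5.4]{T-2020} for $\mathcal{Y}$ and matching its classifying datum with that of $\X$ without invoking a dimension hypothesis. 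This step requires tracking how annihilators outside $\p$ propagate under localization and syzygy operations, and it is exactly the refinement that \cite{T-2020} makes available over \cite{DT-2015}.
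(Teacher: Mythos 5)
Your overall strategy matches the paper's: construct the subcategory $\R(\p)$ (your $\mathcal{Y}$), show it is dominant resolving, and use Takahashi's classification \cite[Theorem 5.4]{T-2020} to identify it with the maximal Cohen-Macaulay condition at $\p$. Your verification that $\mathcal{Y}$ is resolving is fine, and your direct localization argument for $(1)\Rightarrow(2)$ is correct (the paper subsumes it in the subcategory equality rather than proving it separately). The gap is that you explicitly defer the two steps where the mathematical content actually lives, labelling them the ``main obstacle'': (i) dominance of $\mathcal{Y}=\R(\p)$, and (ii) the computation showing the classifying function of $\R(\p)$ agrees with that of $\X=\{M : M_\p \text{ is MCM}\}$. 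Without those, the argument does not close.

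For (i), the paper does not verify dominance from the definition. Instead it observes that $\R(\p)$ contains \emph{every} maximal Cohen-Macaulay $R$-module (a consequence of the standard vanishing $H^i_{\fa}(M)=0$ for $i<\grade(\fa,R)$ when $M$ is MCM, recorded in \cite[Remark 4.4(2)]{Y-2020}), and then cites \cite[Corollary 4.7]{T-2020}: over a Cohen-Macaulay ring any resolving subcategory containing all MCM modules is automatically dominant. This sidesteps the need to chase syzygies of residue fields. For (ii), the key ingredients are: Proposition~\ref{proposition-R}(1), which pins $\R(\p)$ inside $\{M : \depth M_\q \geq \grade(\q,R)\}$ for each $\q \in U(\p)$; Proposition~\ref{proposition-R}(2), which puts $R/\q \in \R(\p)$ for $\q\notin U(\p)$ and hence (Lemma~\ref{lemma-range}(2)) forces $f_{\R(\p)}(\q)=\depth R_\q$ there, trivializing those constraints; the Cohen-Macaulay identity $\grade(\q,R)=\depth R_\q$, which combined with Lemma~\ref{lemma-range}(1) yields $f_{\R(\p)}(\q)=0$ on $U(\p)$; and Lemma~\ref{lemma-CM}, which collapses the family of inequalities $\depth R_\q \leq \depth M_\q$ for $\q\in U(\p)$ to the single condition $\depth R_\p \leq \depth M_\p$, i.e., $M_\p$ MCM. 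Your sketch correctly anticipates that these verifications are where \cite{T-2020} earns its keep over \cite{DT-2015}, but ``I would compare these data'' is not a proof of the comparison; you need the explicit chain above (or an equivalent one) to make the equality $\mathcal{Y}=\X$ a theorem rather than a plan.
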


Theorem \ref{theorem-A} leads to the following corollary: the Cohen-Macaulay dimension of a finitely generated module influences whether local cohomology modules have an annihilator that does not depend on the choice of the ideal. 
  
\begin{corollary}\label{corollary-A}
Suppose that $R$ is a Cohen-Macaulay ring, and let $\p \in \Spec(R)$. 
Then, for a finitely generated $R$-module $M$, there exists an element $s \in R \setminus \p$ such that 
\[ sH^{i}_{\fa}(M)=0 \]
for all ideals $\fa$ of $R$ and all integers $i<\grade(\fa, R)-\CMd_{R_{\p}}\, M_{\p}$. 
\end{corollary}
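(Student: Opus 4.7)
My plan is to reduce Corollary \ref{corollary-A} to Theorem \ref{theorem-A} by replacing $M$ with a sufficiently high syzygy. Set $d:=\CMd_{R_{\p}}\,M_{\p}$. When $d=\infty$ the inequality $i<\grade(\fa,R)-d$ is never satisfied, so the statement is vacuous and any $s\in R\setminus\p$ works; I may therefore assume that $d$ is finite.

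The first step is to show that the $d$-th syzygy $\syz^{d}M$ has the property that $(\syz^{d}M)_{\p}$ is a maximal Cohen-Macaulay $R_{\p}$-module. This uses standard properties of Gerko's Cohen-Macaulay dimension: $\CMd$ commutes with localization and behaves like projective dimension under taking syzygies, so $\CMd_{R_{\p}}(\syz^{d}M)_{\p}=0$, which over the Cohen-Macaulay local ring $R_{\p}$ means exactly that $(\syz^{d}M)_{\p}$ is maximal Cohen-Macaulay. Theorem \ref{theorem-A} applied to $\syz^{d}M$ then supplies an element $s\in R\setminus\p$ with $sH^{j}_{\fa}(\syz^{d}M)=0$ for every ideal $\fa$ of $R$ and every $j<\grade(\fa,R)$.

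The second step transports this annihilator back to $M$. For each $k\ge 0$, pick a short exact sequence $0\to\syz^{k+1}M\to F_{k}\to\syz^{k}M\to 0$ with $F_{k}$ a finite free $R$-module. Because $H^{i}_{\fa}(F_{k})=0$ for all $i<\grade(\fa,R)$, the long exact sequence of local cohomology yields an isomorphism $H^{i}_{\fa}(\syz^{k}M)\cong H^{i+1}_{\fa}(\syz^{k+1}M)$ whenever $i+1<\grade(\fa,R)$. Chaining these $d$ times gives $H^{i}_{\fa}(M)\cong H^{i+d}_{\fa}(\syz^{d}M)$ throughout the range $i<\grade(\fa,R)-d$, and combining this with the annihilator produced in the first step yields $sH^{i}_{\fa}(M)=0$ as claimed.

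The only non-cosmetic ingredient beyond Theorem \ref{theorem-A} and the standard vanishing $H^{i}_{\fa}(R)=0$ for $i<\grade(\fa,R)$ is the behaviour of $\CMd$ under localization and under iteration of syzygies; these are known, but the main place where I would be careful is the exact bookkeeping ensuring that $\CMd_{R_{\p}}(\syz^{d}M)_{\p}=0$, since this is what enables Theorem \ref{theorem-A} to be invoked on $\syz^{d}M$. No deeper obstacle is anticipated.
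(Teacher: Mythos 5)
Your proposal follows essentially the same route the paper takes: reduce to Theorem~\ref{theorem-A} by passing to the $d$-th syzygy (the paper proves Corollary~\ref{corollary-FD} by exactly this syzygy argument and then obtains Corollary~\ref{corollary-A} from it via the identification $\CMd_{R_\p}M_\p=\sup_{\q\in U(\p)}\{\depth R_\q-\depth M_\q\}$). The second step, chaining the long exact sequences of local cohomology from the short exact sequences defining the syzygies, is correct, and the use of the Auslander--Buchsbaum-type formula and the syzygy formula for $\CMd$ to see that $(\syz^{d}M)_\p$ is maximal Cohen--Macaulay is also sound.

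There is, however, a gap in your case analysis of $d=\CMd_{R_\p}M_\p$. Over the Cohen--Macaulay local ring $R_\p$ the value $d=+\infty$ never occurs, so that branch is empty; what does occur is $d=-\infty$, namely when $M_\p=0$. In that case $\grade(\fa,R)-d=+\infty$, so the statement to prove is \emph{not} vacuous --- on the contrary it is the strongest instance, asserting that some $s\in R\setminus\p$ kills $H^{i}_{\fa}(M)$ for \emph{every} ideal $\fa$ and \emph{every} integer $i$. Your syzygy argument cannot produce this (one cannot take $\syz^{-\infty}M$, and the chain of isomorphisms only reaches finitely far). You need the separate, easy observation the paper records in a remark: if $M_\p=0$ then there is an $s\in R\setminus\p$ with $sM=0$, and since local cohomology is $R$-linear this forces $sH^{i}_{\fa}(M)=0$ for all $\fa$ and all $i$. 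Adding that paragraph closes the gap; the rest of your proof is fine.
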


%10
%-----
Another purpose is to investigate the above question under (not necessarily Cohen-Macaulay) finite-dimensional rings. 
The following theorem is one of the main results, which provides an answer to our question under some assumptions. 

\begin{theorem}\label{theorem-B}\setlength{\leftmargini}{18pt} 
Suppose that $R$ is a finite-dimensional ring, and let $\p$ be a prime ideal of $R$ with $\depth\, R_{\q}=\grade(\q, R)$ for all  $\q \in U(\p)=\{ \q \in \Spec(R) \mid \q \subseteqq \p \}$. 
Then, for a finitely generated $R$-module $M$, the following conditions are equivalent. 
\begin{enumerate}
\item  There exists an element $s \in R \setminus \p$ such that $sH_{\fa}^{i}(M)=0$ for all ideals $\fa$ of $R$ and all integers $i < \grade(\fa, R)$. 

%-----
\item One has $\sup_{\q \in U(\p)} \{ \depth\, R_{\q} - \depth\, M_{\q}\} \leqq 0$.  
\end{enumerate}
\end{theorem}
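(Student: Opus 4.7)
The plan is to prove Theorem \ref{theorem-B} by following the same paradigm as Theorem \ref{theorem-A}. I introduce the class of finitely generated $R$-modules satisfying condition (1) as a resolving subcategory of $R\text{-}\mathrm{mod}$, verify that it is dominant in the sense of \cite[Theorem 5.4]{T-2020}, and then translate the classification into the depth inequality (2). The hypothesis $\depth\, R_\q = \grade(\q, R)$ for $\q \in U(\p)$ plays the role that Cohen-Macaulayness played for Theorem \ref{theorem-A}: it allows the grade-based annihilation statement in (1) to be recast as a depth inequality after localization at primes in $U(\p)$.

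The direction $(1) \Rightarrow (2)$ is direct. Given $s \in R \setminus \p$ as in (1) and any $\q \in U(\p)$, I specialize to the ideal $\fa = \q$. Since $\grade(\q, R) = \depth\, R_\q$ by hypothesis, I obtain $sH^i_\q(M) = 0$ for all $i < \depth\, R_\q$. As $s \notin \p \supseteq \q$, the element $s$ is a unit in $R_\q$; localizing at $\q$ yields $H^i_{\q R_\q}(M_\q) = 0$ for $i < \depth\, R_\q$. The usual characterization of depth via vanishing of local cohomology then gives $\depth\, M_\q \geqq \depth\, R_\q$, i.e., $\depth\, R_\q - \depth\, M_\q \leqq 0$ for every $\q \in U(\p)$, which is (2).

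For the direction $(2) \Rightarrow (1)$, I set
\[
\X_\p = \{\, M \in R\text{-}\mathrm{mod} \mid \text{condition } (1) \text{ holds for } M \,\}
\]
and verify that $\X_\p$ is a resolving subcategory: it contains $R$ trivially because $H^i_\fa(R) = 0$ for $i < \grade(\fa, R)$; it is closed under direct summands; under extensions, via the long exact sequence of local cohomology with the product of two witness elements; and under syzygies, since for a short exact sequence $0 \to \syz M \to F \to M \to 0$ with $F$ free, the vanishing $H^i_\fa(F) = 0$ for $i < \grade(\fa, R)$ makes $H^i_\fa(\syz M)$ a quotient of $H^{i-1}_\fa(M)$, which is killed by the witness for $M$. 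I then verify that $\X_\p$ is dominant in the sense required by \cite[Theorem 5.4]{T-2020}, after which the classification theorem identifies $\X_\p$ with a subcategory described by a local depth condition on the primes in $U(\p)$; under the depth-grade hypothesis this description simplifies to (2).

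The main obstacle will be the verification that $\X_\p$ is dominant in the sense of \cite{T-2020} without any Cohen-Macaulay assumption on $R$, and matching the parametrizing data in the classification with the invariant $\sup_{\q \in U(\p)} \{ \depth\, R_\q - \depth\, M_\q \}$. In particular, the depth-grade compatibility hypothesis is essential to ensure that the grade-based annihilation data defining $\X_\p$ translates faithfully into the depth-based classification data; finite dimensionality of $R$ is used to keep the supremum finite and to allow uniform statements across $U(\p)$.
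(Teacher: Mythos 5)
Your overall strategy coincides with the paper's: introduce the resolving subcategory $\R(\p)$ of modules satisfying (1), invoke Takahashi's classification \cite[Theorem 5.4]{T-2020} to attach a function $f \in \Fb(\Rmod)$, and then pin down $f$ on $U(\p)$ using the depth-grade hypothesis. Your $(1)\Rightarrow(2)$ argument is complete and is exactly the content of Proposition~\ref{proposition-R}~(1) in the paper (specialize $\fa=\q$, localize, use the local cohomology characterization of depth). So no issue there, modulo the routine care about $M_\q=0$.

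The $(2)\Rightarrow(1)$ direction, however, is only an outline, and the two points you flag as ``the main obstacle'' are precisely the substantive content of the proof that you leave unresolved. First, you do not prove dominance of $\X_\p=\R(\p)$; the paper handles this by citing a prior result (Proposition~\ref{proposition-R}~(4), which invokes \cite[Proposition 3.6(2)]{Y-2020}) whose proof genuinely uses finite dimensionality to control syzygies of residue fields. Asserting ``I then verify that $\X_\p$ is dominant'' without doing so leaves a gap. Second, even granted dominance, you still need to compute the classifying function $f_{\R(\p)}$ and explain why the membership criterion from the classification only imposes a nontrivial constraint for $\q\in U(\p)$. The paper does this in two steps: it shows $R/\q\in\R(\p)$ whenever $\q\notin U(\p)$ (Proposition~\ref{proposition-R}~(2)), which via Lemma~\ref{lemma-range}~(2) forces $f(\q)=\depth R_\q$ there and makes the inequality $\depth R_\q-\depth M_\q\leqq f(\q)$ vacuous outside $U(\p)$ (Lemma~\ref{lemma-basic}/Theorem~\ref{essential-theorem}); and it combines $\R(\p)\subseteqq\{M\mid\depth M_\q\geqq\grade(\q,R)\}$ with the hypothesis $\grade(\q,R)=\depth R_\q$ and Lemma~\ref{lemma-range}~(1) to force $f(\q)=0$ for $\q\in U(\p)$. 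Your write-up gestures at this (``the classification theorem identifies $\X_\p$ with a subcategory described by a local depth condition on the primes in $U(\p)$'') but does not supply either the vacuousness-outside-$U(\p)$ argument or the computation $f(\q)=0$, which are what turn Takahashi's bijection into the concrete statement (2). Until both are filled in, the hard direction is not proved.
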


The above assumption of prime ideals is not an unacceptable condition. 
Indeed, our assumption is very similar to the Cohen-Macaulayness of rings: \cite[Lemma 3.1]{CFF-2002} states that one has $\depth\, R_{\q}=\grade (\q, R)$ for all $\q \in \Spec(R)$ if and only if $R$ is an almost Cohen-Macaulay ring; for details, see also Remark \ref{remark-FD} (2). 
%in the sense that the Cohen-Macaulay defect $\mathrm{cmd}\, R=\sup_{\q \in \Spec(R)}\{ \dim R_{\q} - \depth\, R_{\q} \} \leqq 1$. 

Finally, we can investigate all finitely generated modules by combining Theorem \ref{theorem-B} with the following result. 

\begin{corollary}\label{corollary-B}
Let $R$ and $\p$ be as in Theorem \ref{theorem-B}, and let $M$ be a finitely generated $R$-module with $\sup_{\q \in U(\p)}\{ \depth\, R_{\q} - \depth\, M_{\q}\} \geqq 0$. 
Then there exists an element $s \in R \setminus \p$ such that 
\[ sH^{i}_{\fa}(M)=0 \]
for all ideals $\fa$ of $R$ and all integers $i<\grade(\fa, R) - \sup_{\q \in U(\p)}\{ \depth\, R_{\q} - \depth\, M_{\q}\}$. 
\end{corollary}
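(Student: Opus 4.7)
The plan is to reduce Corollary~\ref{corollary-B} to Theorem~\ref{theorem-B} by replacing $M$ with a sufficiently high syzygy. Set
\[ n := \sup_{\q \in U(\p)}\{\depth\, R_{\q} - \depth\, M_{\q}\}, \]
which is a non-negative integer by hypothesis. Fix a resolution of $M$ by finitely generated free $R$-modules $\cdots \to F_{1} \to F_{0} \to M \to 0$ and let $\syz^n M$ denote the $n$-th syzygy of $M$ appearing in this resolution.

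The first step is to check that $\syz^n M$ meets the hypothesis of Theorem~\ref{theorem-B} at $\p$. Localising each short exact sequence $0 \to \syz^{k+1} M \to F_k \to \syz^k M \to 0$ at $\q \in U(\p)$ and applying the standard depth lemma yields
\[ \depth\, (\syz^{k+1} M)_{\q} \geqq \min\{\depth\, R_{\q},\, \depth\, (\syz^k M)_{\q} + 1\}, \]
and an easy induction on $k$ gives $\depth\, (\syz^n M)_{\q} \geqq \min\{\depth\, R_{\q},\, \depth\, M_{\q} + n\}$. By the choice of $n$, $\depth\, M_{\q} + n \geqq \depth\, R_{\q}$ for every $\q \in U(\p)$, hence
\[ \sup_{\q \in U(\p)}\{\depth\, R_{\q} - \depth\, (\syz^n M)_{\q}\} \leqq 0. \]
Theorem~\ref{theorem-B} therefore supplies an element $s \in R \setminus \p$ such that $sH^{j}_{\fa}(\syz^n M) = 0$ for all ideals $\fa$ of $R$ and all integers $j < \grade(\fa, R)$.

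The second step is to transfer this annihilator back to $M$ via the long exact sequences of local cohomology attached to the same syzygy sequences. Because $F_k$ is a finitely generated free $R$-module, $H^{i}_{\fa}(F_k) = 0$ whenever $i < \grade(\fa, R)$, and the connecting homomorphism then produces an injection $H^{i}_{\fa}(\syz^k M) \hookrightarrow H^{i+1}_{\fa}(\syz^{k+1} M)$ in that range. Composing $n$ such injections yields
\[ H^{i}_{\fa}(M) \hookrightarrow H^{i+n}_{\fa}(\syz^n M) \]
whenever $i + n - 1 < \grade(\fa, R)$. For any $i < \grade(\fa, R) - n$ this injection and the annihilation $sH^{i+n}_{\fa}(\syz^n M) = 0$ apply simultaneously, forcing $sH^{i}_{\fa}(M) = 0$.

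The main point to take care of is the bookkeeping at the two index thresholds: each syzygy step both requires $i < \grade(\fa, R)$ in order to produce injectivity and shifts the relevant cohomological degree by one, so one must verify that the range in which $s$ annihilates the local cohomology of $\syz^n M$ pulls back to precisely $i < \grade(\fa, R) - n$ for $M$. No genuinely new ingredient beyond Theorem~\ref{theorem-B} and the depth lemma is required.
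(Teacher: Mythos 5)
Your proof is correct and matches the paper's own argument: both pass to the $n$-th syzygy via iterated use of the depth lemma, apply Theorem~\ref{theorem-B} to $\syz^{n}_{R}M$, and then transfer the resulting annihilator back to $M$ through the long exact sequences of local cohomology attached to the syzygy short exact sequences. The only cosmetic differences are that the paper records isomorphisms $H^{i}_{\fa}(\syz^{k}_{R}M)\cong H^{i+1}_{\fa}(\syz^{k+1}_{R}M)$ where an injection (as you use) already suffices, and that the finiteness of $n$ --- not merely $n\geqq 0$, which is what the hypothesis gives --- deserves a word: it follows since $\depth R_{\q}\leqq\dim R<\infty$ for all $\q\in U(\p)$ because $R$ is finite-dimensional.
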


%30
\vspace{5pt}
%========================================
%= Organization
%========================================
The organization of this paper is as follows. 
We dedicate section \ref{Preliminaries} to the preparation: notations, definitions, and fundamental facts. 
Section \ref{NASC} provides the essential result in this paper; see Theorem \ref{essential-theorem}.  
This result describes that Takahashi's classification theorem \cite[Theorem 5.4]{T-2020} provides a necessary and sufficient condition that local cohomology modules have an annihilator that does not depend on the choice of the ideal. 
After showing Theorem \ref{theorem-B} and Corollary \ref{corollary-B} in section \ref{finite-dimensional}, we establish Theorem \ref{theorem-A} and Corollary \ref{corollary-A} in section \ref{Cohen-Macaulay}.

%60
%==================================================================
%=
%=
%=   section : Preliminaries
%=
%=
%==================================================================
\section{Preliminaries}\label{Preliminaries}
%====================================================================
Throughout this paper, let $R$ be a commutative Noetherian ring, and all modules are assumed to be unitary. 
We denote by $\Rmod$ the category of finitely generated $R$-modules. 
We suppose that all subcategories of $\Rmod$ are full and closed under taking isomorphisms. 
The symbol $\mathbb{N}_0$ denotes the set of non-negative integers. 
We adopt the convention that the grade, the depth, and the dimension of the zero module are $\infty$.

\vspace{5pt}
%10
%========================================
%=  the definition of \Fb(modR)
%========================================
First of all, we will recall a set of $\mathbb{N}_{0}$-valued functions on $\Spec(R)$ that was introduced by Takahashi in \cite[Definition 5.1 (1)]{T-2020}. 
We note that the symbol $\mathbb{N}$ in the paper \cite{T-2020} stands for the set of non-negative integers. 

\begin{definition}\setlength{\leftmargini}{18pt} 
Let $\mathcal{C}$ be a subcategory of $\Rmod$. 
We denote by $\Fb(\C)$ the set of maps $f \colon \Spec(R) \to \mathbb{N}_{0}$ such that, for every $\p \in \Spec(R)$, there exists $E \in \C$ satisfying the following two conditions:
\begin{enumerate}
\item One has $\depth\, R_{\p} - \depth\, E_{\p}=f(\p)$; 

%-----
\item One has $\depth\, R_{\q} - \depth\, E_{\q} \leqq f(\q)$ for all $\q \in \Spec(R)$.
\end{enumerate}
\end{definition}

%10
%========================================
%= resolving subcategory and dominant subcategory
%========================================
Next, we need to recall the notions of a dominant subcategory and a resolving subcategory of $\Rmod$.  
We denote by $\syz_{R}^n M$ the $n$th syzygy module of an $R$-module $M$. 
In particular, we will write $\syz_{R}^1 M=\syz_{R} M$, which is the kernel of some epimorphism from a projective module to $M$. 
Note that $\syz_{R}^n M$ is uniquely determined up to projective summands. 

\begin{definition} \setlength{\leftmargini}{18pt} 
Let $\X$ be a subcategory of $\Rmod$. 
\begin{enumerate}
\item We denote by $\add\, \X$ the subcategory of $\Rmod$ consisting of the modules isomorphic to direct summands of finite direct sums of copies of modules in $\X$. 

%-----
\item We say that $\X$ is {\it dominant} if, for each $\p \in \Spec(R)$, there exists a non-negative integer $n$ such that $\syz_{R_{\p}}^n \kappa(\p) \in \add\, \X_{\p}$, where $\kappa(\p)=R_{\p}/\p R_{\p}$ and $\X_{\p}=\{ X_{\p} \in R_{\p}\text{-}\mathrm{mod} \mid X \in \X \}$. 

%-----
\item We say that $\X$ is a {\it resolving subcategory} if $\X$ contains all finitely generated projective modules and is closed under taking direct summands, extensions, and syzygies. 
\end{enumerate}
\end{definition}

%10
%========================================
%= Takahashi's classification theorem
%========================================
Takahashi showed that the set $\Fb(\Rmod)$ classifies the dominant resolving subcategories of $\Rmod$; see \cite [Definition 5.1 and Theorem 5.4]{T-2020}.  

\begin{theorem}\label{T-theorem}\text{\bf (Takahashi)}. 
There exist mutually inverse order-preserving bijections 
\[ 
\left\{\begin{matrix} \text{ dominant resolving subcategories  of } \Rmod \ \end{matrix} \right\} 
\begin{matrix} \overset{\phi}{\longrightarrow} \cr \underset{\psi}{\longleftarrow}  \cr \end{matrix}  
\ \Fb(\Rmod)
\]
where the maps $\phi$ and $\psi$ are given by
\begin{align*}
& \phi(\X)(\p) = \depth\, R_{\p} - \inf_{X \in \X} \{ \depth\, X_{\p} \} \ \text{ for } \p \in \Spec(R), \text{ and } \\
& \psi(f)=\left\{ X \in \Rmod \mid \depth\, R_{\p} - \depth\, X_{\p} \leqq f(\p) \text{ for all } \p \in \Spec(R) \right\}.
\end{align*}
\end{theorem}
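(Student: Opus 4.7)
The plan is to verify in turn that (i) $\phi$ and $\psi$ are well-defined, (ii) both are order-preserving, and (iii) they are mutually inverse. For (i), checking that $\psi(f)$ is a resolving subcategory is routine: the depth lemma gives closure under extensions, the formula $\depth(X\oplus Y)_\p=\min\{\depth X_\p,\depth Y_\p\}$ gives closure under summands, the standard syzygy inequality $\depth\syz_{R_\p}M_\p\geqq\min\{\depth R_\p,\depth M_\p+1\}$ gives closure under syzygies, and projectives are evidently in $\psi(f)$; dominance of $\psi(f)$ then follows because any local witness demanded by dominance can be supplied by the witness modules encoded in $f\in\Fb(\Rmod)$. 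Checking that $\phi(\X)$ lies in $\Fb(\Rmod)$ reduces to showing that the infimum $\inf_{X\in\X}\{\depth X_\p\}$ is attained by some $E\in\X$ that simultaneously realises the global defect bound; I would establish this by a finiteness argument combining near-optimal local witnesses with the closure of $\X$ under direct summands of finite direct sums.

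Order-preservation in (ii) is immediate: enlarging $\X$ shrinks the infimum and thus enlarges $\phi(\X)$, while $f\leqq g$ pointwise evidently yields $\psi(f)\subseteq\psi(g)$. For (iii), the composition $\phi\circ\psi=\mathrm{id}$ is the easier direction: given $f\in\Fb(\Rmod)$ and $\p\in\Spec(R)$, every witness module $E$ for $f$ at $\p$ automatically belongs to $\psi(f)$, because the second witness condition in the definition of $\Fb$ is verbatim the membership condition for $\psi(f)$; the infimum therefore attains the value $\depth R_\p-f(\p)$, giving $\phi(\psi(f))(\p)=f(\p)$. The inclusion $\X\subseteq\psi(\phi(\X))$ is tautological from the definition of $\phi$, which handles one half of the other composition.

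The real obstacle is the reverse inclusion $\psi(\phi(\X))\subseteq\X$: given any $X\in\Rmod$ satisfying $\depth R_\p-\depth X_\p\leqq\phi(\X)(\p)$ at every $\p$, one must produce $X$ from modules of $\X$ using only extensions, direct summands, and syzygies. My approach would exploit the dominant hypothesis: since $\syz^n_{R_\p}\kappa(\p)\in\add\,\X_\p$ locally at each $\p$, one can hope to patch local presentations along the support of $X$ via prime avoidance and a Nakayama-type descent, then assemble $X$ by a bounded chain of the three resolving operations. The hard technical heart will be showing that the pointwise depth inequality is strong enough to force such a global construction; I expect this to hinge on the depth sensitivity of $\mathrm{Ext}$ together with an induction on the dimension of the nonfree locus, which is precisely where the machinery of \cite{T-2020} becomes indispensable.
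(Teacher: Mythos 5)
The paper does not prove this statement at all: it is quoted verbatim as \cite[Theorem 5.4]{T-2020} and attributed to Takahashi, so there is no in-paper proof for your sketch to be compared against. What you have written is therefore an attempt to reprove Takahashi's classification from scratch, and as it stands it has several real gaps.

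First, your argument for well-definedness is incomplete in both directions. You assert that dominance of $\psi(f)$ ``follows because any local witness demanded by dominance can be supplied by the witness modules encoded in $f$,'' but the witness $E$ in the definition of $\Fb(\Rmod)$ only controls depth defects; it is not at all clear why some $\syz^n_{R_\p}\kappa(\p)$ should lie in $\add\,\psi(f)_\p$, and this requires a genuine argument (in Takahashi's paper it depends on auxiliary constructions relating resolving closures to syzygies of residue fields). Likewise, for $\phi(\X)\in\Fb(\Rmod)$ you must exhibit, for each $\p$, a single module $E\in\X$ that simultaneously attains the infimum at $\p$ and satisfies the global defect bound at every other prime. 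A ``finiteness argument combining near-optimal local witnesses'' is not enough: $\Spec(R)$ is generally infinite, a naive direct sum of near-optimal witnesses over infinitely many primes is not finitely generated, and the fact that $\X$ is merely closed under finite direct sums does not let you take a limit. This is precisely the kind of place where the dominance hypothesis and nontrivial structure theory are used.

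Second, and most decisively, you explicitly concede the reverse inclusion $\psi(\phi(\X))\subseteqq\X$, writing that it is ``precisely where the machinery of [T-2020] becomes indispensable.'' That is the entire content of the theorem; without it you have only shown $\X\subseteqq\psi(\phi(\X))$ and $\phi\circ\psi=\mathrm{id}$, which do not give a bijection. A plan that defers the hard direction back to the very reference being reproved is not a proof. If you want to engage with this result, the honest options are either to cite it (as the present paper does) or to actually carry out Takahashi's construction---his reduction to local rings, the use of NE-loci/nonfree loci and a delicate induction on the dimension of that locus, and the Ext-vanishing and grade-consistency arguments that make the resolving operations reach an arbitrary $X$ satisfying the pointwise depth bound.
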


%30
%========================================
% the definition of $R(\p)$
%========================================
Finally, we will give the notion of a subcategory associated with local cohomology modules and summarize some properties. 
We adopt the convention that the local cohomology functor $H^{i}_{\fa}(-)$ is the zero functor for all ideals $\fa$ of $R$ and all negative integers $i$. 

\begin{definition}\setlength{\leftmargini}{18pt} 
\begin{enumerate} 
\item We define the subcategory $\R(\p)$ of $\Rmod$ relative to a prime ideal $\p$ of $R$ by 
\[ \R(\p)=\left\{ M \in \Rmod \rmid| 
\begin{matrix}
\text{There exists } s \in R \setminus \p \text{ such that } sH^{i}_{\fa}(M)=0 \cr 
\text{ for all ideals } \fa \text{ of } R \text{ and all integers } i < \grade(\fa, R) 
\end{matrix} \right\}. \] %for all and all

%-----
\item We denote by $U(\p)$ the generalization closed subset $\{\q \in \Spec(R) \mid \q \subseteqq \p\}$ of $\Spec(R)$ for $\p \in \Spec(R)$.
\end{enumerate} 
\end{definition}

%10
%------
A finitely generated $R$-module $M$ over an arbitrary ring $R$ is called a {\it maximal Cohen-Macaulay module} if $\depth\, M_{\p} \geqq \dim R_{\p}$ for all $\p \in \Spec(R)$. 
We note that the depth of the zero module is $\infty$, and thus we consider the zero module to be maximal Cohen-Macaulay. 

\begin{proposition}\label{proposition-R} \setlength{\leftmargini}{18pt} 
Let $\p$ be a prime ideal of $R$.  
\begin{enumerate}
\item For each $\q \in U(\p)$, one has $\R(\p) \subseteqq \{ M \in \Rmod \mid \depth\, M_{\q} \geqq \grade(\q, R) \}$.

%-----
\item For each $\q \in \Spec(R) \setminus U(\p)$, the $R$-module $R/\q$ is in $\R(\p)$. 

%-----
\item The subcategory $\R(\p)$ is a resolving subcategory of $\Rmod$. 

%----
\item We suppose that $R$ is a finite-dimensional ring. Then the subcategory $\R(\p)$ is dominant. 

%----
\item We suppose that $R$ is a Cohen-Macaulay ring. 
Then the subcategory $\R(\p)$ contains all maximal Cohen-Macaulay $R$-modules. 
In particular, the subcategory $\R(\p)$ is dominant. 
\end{enumerate}
\end{proposition}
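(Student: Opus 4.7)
I would prove the five parts in the order listed, since each builds on the previous. Parts (1) and (2) are immediate consequences of the definition. For (1), pick $M\in\R(\p)$ with witness $s\in R\setminus\p$ and $\q\in U(\p)$; then $\q\subseteqq\p$ forces $s\notin\q$, so $s$ is a unit in $R_\q$. Specializing $\fa=\q$ and localizing gives $H^i_{\q R_\q}(M_\q)=0$ for $i<\grade(\q,R)$, whence $\depth_{R_\q}M_\q\geqq\grade(\q,R)$. For (2), since $\q\not\subseteqq\p$, any $s\in\q\setminus\p$ annihilates $R/\q$ and hence every $H^i_\fa(R/\q)$.

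For (3) I would verify the four axioms of a resolving subcategory by standard long exact sequence arguments: projectives lie in $\R(\p)$ with witness $s=1$ because $H^i_\fa(P)=0$ for $i<\grade(\fa,R)$; closure under direct summands is obvious; for an extension $0\to M_1\to M\to M_2\to 0$ with witnesses $s_1,s_2$, the long exact sequence and primality of $\p$ make $s_1s_2\in R\setminus\p$ a witness for $M$; and for a syzygy the sequence $0\to\syz_R M\to P\to M\to 0$ together with $H^i_\fa(P)=0$ for $i<\grade(\fa,R)$ identifies $H^i_\fa(\syz_R M)$ as a quotient of $H^{i-1}_\fa(M)$, so the witness for $M$ passes to $\syz_R M$.

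For the dominance assertions in (4) and (5) I would split by whether $\q\in U(\p)$. If $\q\notin U(\p)$, part (2) supplies $R/\q\in\R(\p)$ and $(R/\q)_\q=\kappa(\q)$, so $n=0$ works. If $\q\in U(\p)$, my plan is to take $Y=\syz^n_R(R/\q)$ for a suitable finite $n$; since $Y_\q$ agrees with $\syz^n_{R_\q}\kappa(\q)$ up to free $R_\q$-summands and $R_\q\in\R(\p)_\q$ because $R\in\R(\p)$ trivially, it suffices to exhibit $Y\in\R(\p)$. Iterating the short exact sequences $0\to\syz^k_R(R/\q)\to P_{k-1}\to\syz^{k-1}_R(R/\q)\to 0$ and using the vanishing of $H^i_\fa$ on projectives below grade, one obtains $H^j_\fa(Y)=0$ for $j<\min(n,\grade(\fa,R))$ and $H^j_\fa(Y)\cong H^{j-n}_\fa(R/\q)$ for $n\leqq j<\grade(\fa,R)$. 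In (4), finite Krull dimension makes $n:=\dim R\geqq\grade(\fa,R)$ for every $\fa$, so the second range is empty and $Y\in\R(\p)$ with $s=1$. For the MCM containment in (5), I would observe that for MCM $M$ over Cohen-Macaulay $R$ one has $\grade(\fa,M)\geqq\inf_{\p'\in V(\fa)}\dim R_{\p'}=\grade(\fa,R)$, so $H^i_\fa(M)=0$ below grade and $s=1$ works.

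The main obstacle I foresee is the dominance step of (5), because without global finite-dimensionality the module $\syz^{\dim R_\q}_R(R/\q)$ need not be MCM over $R$, so one is left having to annihilate the residual modules $H^k_\fa(R/\q)$ for $k<\grade(\fa,R)-\dim R_\q$ by a single element $s\in R\setminus\p$. I intend to handle this by leveraging the Cohen-Macaulay hypothesis together with the catenary structure, comparing $\grade(\fa,R/\q)$ with $\height\fa-\height\q$, and combining this with the already-established MCM containment to exhibit an MCM $R$-module whose localization at $\q$ admits the required syzygy as a summand; alternatively, one may argue via an MCM approximation argument local at $\q$ and a patching construction, which is the technical point I expect to require the most care.
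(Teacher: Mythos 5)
Parts (1)--(4), and the maximal Cohen--Macaulay containment in (5), are correct, and you supply explicit arguments where the paper simply cites Y-2020. In (1) one should note separately that $M_\q=0$ gives $\depth\, M_\q=\infty$ by convention (making the bound vacuous), and otherwise invoke the depth-sensitivity of local cohomology as in the paper; you glossed this but it is a triviality. Your syzygy computation in (4) is clean: with $n=\dim R<\infty$ one has $n\geqq\grade(\fa,R)$ for every proper ideal $\fa$, so the residual range $n\leqq j<\grade(\fa,R)$ is empty, $\syz^n_R(R/\q)$ lies in $\R(\p)$ with $s=1$, and its localization at $\q$ contains $\syz^n_{R_\q}\kappa(\q)$ as a direct summand. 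Likewise the inequality $\grade(\fa,M)\geqq\inf_{\p'\in V(\fa)}\dim R_{\p'}=\grade(\fa,R)$ for a maximal Cohen--Macaulay $M$ over Cohen--Macaulay $R$ correctly gives the first half of (5) with $s=1$.

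The genuine gap is exactly where you flag it: the dominance claim in (5). When $\dim R=\infty$ there is no uniform $n$ with $n\geqq\grade(\fa,R)$ for all ideals $\fa$, so the syzygy trick of (4) leaves a nonempty range in which $H^{j}_{\fa}(R/\q)$ must be uniformly annihilated by a single $s\in R\setminus\p$, and neither workaround you sketch closes this. MCM approximation generally requires a canonical module, which an arbitrary Cohen--Macaulay ring need not possess, and the ``patching'' step is not given a precise meaning. Moreover $\syz^{\dim R_\q}_R(R/\q)$ is in general not maximal Cohen--Macaulay over $R$: at a prime $\p'\supsetneq\q$ the depth lemma only bounds its depth below by $\min(\dim R_\q,\depth\, R_{\p'})=\dim R_\q$, which may be strictly smaller than $\dim R_{\p'}$, so it cannot simply be fed into the containment you established. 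The paper circumvents all of this by invoking Corollary 4.7 of Takahashi (T-2020), which is an independent theorem proved without finite-dimensionality; without reproving that result, your proposal does not establish (5).
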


\begin{proof}
(1)\,  Let $\q \in U(\p)$. 
We suppose that a finitely generated $R$-module $M$ is in the subcategory $\R(\p)$. 
The definition of $\R(\p)$ gives an element $s \in R \setminus \p$ such that $s H^{i}_{\q}(M)=0$ for all integers $i <\grade(\q, R)$. 
The flat base change theorem \cite[Theorem 4.3.2]{BS-1998} yields $(s/1)H^{i}_{\q R_{\q}}(M_{\q})=0$ where $s/1$ is an element of $R_{\q}$.  
Since the assumption of $\q \in U(\p)$ implies that $s \in R \setminus \q$, the element $s/1$ is a unit of $R_{\q}$. 
Therefore, we obtain $H^{i}_{\q R_{\q}}(M_{\q})=0$ for all integers $i<\grade(\q, R)$. 
 
%-----
When we suppose that $\q R_{\q} M_{\q}=M_{\q}$, Nakayama's lemma says that $M_{\q}=0$. 
Thus, we have $\depth\, M_{\q}=\infty >\grade(\q, R)$. 
On the other hand, if we suppose that $\q R_{\q} M_{\q} \neq M_{\q}$, then \cite[Theorem 6.2.7]{BS-1998} deduces $\depth\, M_{\q} \geqq \grade(\q, R)$. 

\noindent
%-----
(2)\, Our statement is proved by the same argument in the proof of \cite[Proposition 4.2]{Y-2020} without the assumption of the Cohen-Macaulayness for the ring $R$. 

\noindent
%-----
(3)\, We have already shown in \cite[Proposition 3.6 (1)]{Y-2020}. 

\noindent
(4)\, Our assertion follows from \cite[Proposition 3.6 (2)]{Y-2020}. 

\noindent
%-----
(5)\, The former assertion follows from \cite[Remark 4.4 (2)]{Y-2020}. 
Furthermore, since $R$ is a Cohen-Macaulay ring, we deduce from \cite[Corollary 4.7]{T-2020} that the resolving subcategory $\R(\p)$ is dominant. 
(We note that the assertion of \cite[Corollary 4.7]{T-2020} has been proved without assuming that the base ring has a finite dimension.)
\end{proof}

%60
%==================================================================
%=
%=
%=   section :Conditions for the existence of an annihilator of local cohomology modules
%=
%=
%==================================================================
\section{Conditions for the existence of an annihilator of local cohomology modules}\label{NASC}
%====================================================================
\cite[Theorem 5.4]{T-2020} implies that, if the resolving subcategory $\R(\p)$ for $\p \in \Spec(R)$ is dominant, then there exists the map $f_{\R(\p)} \in \Fb(\Rmod)$ corresponding to $\R(\p)$. 
In this section, we will investigate the relationship between values of the map $f_{\R(\p)}$ and the existence of annihilators of local cohomology modules.

%10
First of all, in the case when the resolving subcategory $\R(\p)$ is dominant, the properties in Proposition \ref{proposition-R} (1) and (2) determine a range of value of the map $f_{\R(\p)}$.  

\begin{lemma}\label{lemma-range} \setlength{\leftmargini}{18pt} 
Let $\q$ be a prime ideal of $R$. 
Suppose that a dominant resolving subcategory $\X$ of $\Rmod$ corresponds to a map $f \in \Fb(\Rmod)$ that is given by Theorem \ref{T-theorem}. 
Then the following assertions hold. 
\begin{enumerate}
\item Let $n(\q)$ be an integer with $0 \leqq n(\q) \leqq \depth\, R_{\q}$. 
If $\X$ is contained in $\{ M \in \Rmod \mid \depth\, M_{\q} \geqq n(\q) \}$, then one has the inequalities $0 \leqq f(\q) \leqq \depth\, R_{\q} - n(\q)$. 

%-----
\item If the $R$-module $R/\q$ is in $\X$, then one has the equality $f(\q) = \depth\, R_{\q}$.   
\end{enumerate}
\end{lemma}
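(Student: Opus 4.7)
The plan is to read both parts off the explicit formula $\phi(\X)(\q)=\depth\, R_{\q}-\inf_{X \in \X}\{\depth\, X_{\q}\}$ given by Theorem \ref{T-theorem}. Setting
\[ i(\q) := \inf_{X \in \X}\{ \depth\, X_{\q}\}, \]
the hypothesis that $\X$ corresponds to $f$ translates into the identity $f(\q) = \depth\, R_{\q} - i(\q)$, so both assertions will be reduced to bounding $i(\q)$ from suitable information about members of $\X$.

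For part (1), I would translate the inclusion $\X \subseteqq \{ M \in \Rmod \mid \depth\, M_{\q} \geqq n(\q)\}$ directly into the inequality $i(\q) \geqq n(\q)$, which immediately gives the upper bound $f(\q) \leqq \depth\, R_{\q} - n(\q)$. The lower bound $f(\q) \geqq 0$ is already built into the definition of $\Fb(\Rmod)$, but I would also record a direct reason: since the resolving subcategory $\X$ contains $R$, we have $i(\q) \leqq \depth\, R_{\q}$.

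For part (2), the hypothesis $R/\q \in \X$ furnishes the bound $i(\q) \leqq \depth\, (R/\q)_{\q} = \depth_{R_{\q}} \kappa(\q) = 0$, the last equality holding because the maximal ideal $\q R_{\q}$ annihilates $\kappa(\q)$. Combined with $i(\q) \geqq 0$, which follows from the usual non-negativity of depth for finitely generated modules (and the fact that $i(\q)$ is finite because $R \in \X$), this forces $i(\q)=0$, whence $f(\q) = \depth\, R_{\q}$.

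The whole argument is a direct unpacking of Takahashi's bijection, so I do not expect a genuine obstacle. The only point that requires any care at all is the book-keeping needed to stay inside $\mathbb{N}_{0}$; in particular, one should check that $i(\q)$ is an honest non-negative integer (rather than $+\infty$) so that the formula $f(\q) = \depth\, R_{\q} - i(\q)$ is consistent with $f \in \Fb(\Rmod)$, and this is exactly what the membership $R \in \X$ provides.
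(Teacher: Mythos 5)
Your proof is correct and takes essentially the same route as the paper's: both read the claimed bounds directly off Takahashi's bijection. The only cosmetic difference is that you work with the $\phi$-formula $f(\q) = \depth R_{\q} - \inf_{X \in \X}\{\depth X_{\q}\}$ throughout, whereas the paper first extracts a witness $E(\q) \in \X$ from the definition of $\Fb(\Rmod)$ (a module realizing the infimum) and then uses the $\psi$-description of $\X$; these are equivalent unpackings of the same theorem, and your version is marginally more streamlined. Your care in checking that $\inf_{X \in \X}\{\depth X_{\q}\}$ is a finite non-negative integer (via $R \in \X$) is exactly the right point to attend to.
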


\begin{proof}
\cite[Theorem 5.4]{T-2020} states that the subcategory $\X$ can be described as follows: 
\[\X=\left\{ M \in \Rmod \mid \depth\, R_{\p} - \depth\, M_{\p} \leqq f(\p) \text{ for all } \p \in \Spec(R) \right\}.  \eqno{\cdots (*)} \] 

\noindent
%-----
(1)\, We note that the map $f \in \Fb(\Rmod)$  is an $\mathbb{N}_{0}$-valued map. 
Therefore, we need to establish the inequality $f(\q)\leqq \depth\, R_{\q} - n(\q)$. 

By the definition of $\Fb(\Rmod)$, for the prime ideal $\q$, there exists a finitely generated $R$-module $E(\q)$ satisfying 
\[ \depth\, R_{\q} - \depth\, E(\q)_{\q} = f(\q), \text{ and } \]
\[\depth\, R_{\p} - \depth\, E(\q)_{\p} \leqq f(\p) \text{ for all } \p \in \Spec(R). \]  

%-----
The above equality $(*)$ implies that the module $E(\q)$ is in the subcategory $\X$. 
Therefore, our assumption yields the inequality $\depth\, E(\q)_{\q} \geqq n(\q)$. 
Consequently, we achieve the equality and the inequality
\[ f(\q)=\depth\, R_{\q} - \depth\, E(\q)_{\q} \leqq \depth\, R_{\q} - n(\q). \]

\noindent
%-----
(2)\, Since our assumption says that the module $R/\q$ is in the subcategory $\X$, the above equality $(*)$ yields $\depth\, R_{\q} - \depth\, (R/\q)_{\q} \leqq f(\q)$. 
On the other hand, the definition of $\Fb(\Rmod)$ gives the equality $\depth\, R_{\q} -\depth\, E(\q)_{\q}=f(\q)$ for some finitely generated $R$-module $E(\q)$. 
Consequently, we have
\begin{align*}
\depth\, R_{\q}=\depth\, R_{\q} -\depth\, \kappa\left(\q \right) \leqq f(\q) \leqq \depth\, R_{\q}, 
\end{align*}
that is the equality $f(\q)=\depth\, R_{\q}$. 
\end{proof}

%10
\begin{remark} 
Let $\q$ be a prime ideal of $R$.  

\noindent
%-----
(1)\, We note that each map $f \in \Fb(\Rmod)$ satisfies inequalities $0 \leqq f(\p) \leqq \depth\, R_{\p}$ for all $\p \in \Spec(R)$. 
Considering this fact, the assumption of inequalities $0 \leqq n(\q) \leqq \depth\, R_{\q}$ is appropriate to investigate the assertion (1) in Lemma \ref{lemma-range}. 

\noindent
%-----
(2)\, \cite[Proposition 1.2.10 (a)]{BH-1998} states that one has 
\[ 0 \leqq \grade(\q, R) = \inf\{ \depth\, R_{\p} \mid \p \in V(\q) \} \leqq \depth\, R_{\q}.\]  
\end{remark}

%10
%----------
Next, Lemma \ref{lemma-range} (2) makes the equality (*) in the proof for Lemma \ref{lemma-range} a slightly simpler form. 

\begin{lemma}\label{lemma-basic}
We suppose that a resolving subcategory $\X$ of $\Rmod$ is dominant. 
By Theorem \ref{T-theorem}, the subcategory $\X$ corresponds to a map $f \in \Fb(\Rmod)$. 
Let $S$ be a subset of the set $\{\q \in \Spec(R) \mid R/\q \in \X \}$. 
Then one has the following equality of subcategories of $\Rmod$: 
\[ \X = \{ M \in \Rmod \mid \depth\, R_{\q} - \depth\, M_{\q} \leqq f(\q) \text{ for all } \q \in \Spec(R) \setminus S  \}. \]
\end{lemma}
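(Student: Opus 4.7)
The plan is to prove the two inclusions of the claimed set equality separately, using Theorem \ref{T-theorem} for one direction and Lemma \ref{lemma-range} (2) for the other.

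For the inclusion ``$\subseteqq$'', I would argue that by Theorem \ref{T-theorem} the subcategory $\X$ admits the description
\[ \X = \{ M \in \Rmod \mid \depth\, R_{\q} - \depth\, M_{\q} \leqq f(\q) \text{ for all } \q \in \Spec(R) \}. \]
So if $M \in \X$, then the defining inequality in particular holds for every $\q$ outside $S$, which is exactly the condition for $M$ to belong to the right-hand side.

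For the converse inclusion ``$\supseteqq$'', I would let $M$ lie in the right-hand side and verify that $M$ satisfies the full condition $\depth\, R_{\q} - \depth\, M_{\q} \leqq f(\q)$ for every $\q \in \Spec(R)$, so that Theorem \ref{T-theorem} places $M$ in $\X$. For $\q \in \Spec(R)\setminus S$ the inequality is given by hypothesis. For $\q \in S$, the assumption $R/\q \in \X$ combined with Lemma \ref{lemma-range} (2) yields $f(\q) = \depth\, R_{\q}$. Since depth always takes values in $\mathbb{N}_{0} \cup \{\infty\}$ (we have set the depth of the zero module to $\infty$), we always have $\depth\, M_{\q} \geqq 0$, hence
\[ \depth\, R_{\q} - \depth\, M_{\q} \leqq \depth\, R_{\q} = f(\q), \]
as required.

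The main point of the argument, and the only nontrivial input, is the identification $f(\q) = \depth\, R_{\q}$ for $\q \in S$ supplied by Lemma \ref{lemma-range} (2); once that is in place, the inequality at primes in $S$ becomes automatic from non-negativity of depth. I do not anticipate any real obstacle here, as both inclusions reduce to short bookkeeping with the description of $\X$ given by Theorem \ref{T-theorem}.
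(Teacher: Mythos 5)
Your proof is correct and follows essentially the same route as the paper: both rely on Theorem \ref{T-theorem} for the global description of $\X$ and on Lemma \ref{lemma-range}~(2) to see that $f(\q)=\depth R_{\q}$ for $\q\in S$, which makes the inequality at those primes automatic. The only cosmetic difference is that you split the argument into two inclusions while the paper records it as a single chain of set equalities.
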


\begin{proof}
Let $\q$ be a prime ideal in the set $S$, and let $M$ be a finitely generated $R$-module. 
Since the $R$-module $R/\q$ is in the subcategory $\X$, Lemma \ref{lemma-range} (2) implies that  
\[ \depth\, R_{\q} - \depth\, M_{\q} \leqq \depth\, R_{\q} = f(\q). \] 
Consequently, we can establish the following equalities of subcategories of $\Rmod$ 
\begin{align*}
\X &=\left\{ M \in \Rmod \mid \depth\, R_{\q} - \depth\, M_{\q} \leqq f(\q) \text{ for all } \q \in \Spec(R) \right\} & \\ 
&=\left\{ M \in \Rmod \mid \depth\, R_{\q} - \depth\, M_{\q} \leqq f(\q) \text{ for all } \q \in \Spec(R) \setminus S \right\} 
\end{align*}
by \cite[Theorem 5.4]{T-2020}. 
\end{proof}

%10
%-----------
Let $\p$ be a prime ideal of $R$. 
According to Proposition \ref{proposition-R} (2), the subcategory $\R(\p)$ has the relation $\Spec(R) \setminus U(\p) \subseteqq \{ \q \in \Spec(R) \mid R/\q \in \R(\p)\}$. 
Consequently, as an immediate consequence of Lemma \ref{lemma-basic}, we now present the essential result in this paper.  
The following theorem provides a necessary and sufficient condition that local cohomology modules have a {\it dominant annihilator}, in the sense that this annihilator does not depend on the choice of the ideal for local cohomology modules. 

\begin{theorem}\label{essential-theorem}\setlength{\leftmargini}{18pt} 
Let $\p$ be a prime ideal of $R$. 
Suppose that the resolving subcategory $\R(\p)$ is dominant, and also that $\R(\p)$ corresponds to
the map $f_{\R(\p)}\in \Fb(\Rmod)$ that is given by Theorem \ref{T-theorem}. 
Then, for a finitely generated $R$-module $M$, the following conditions are equivalent. 
\begin{enumerate}
\item  There exists an element $s \in R \setminus \p$ such that $sH_{\fa}^{i}(M)=0$ for all ideals $\fa$ of $R$ and all integers $i < \grade(\fa, R)$. 

%-----
\item One has $\depth\, R_{\q}-\depth\, M_{\q} \leqq f_{\R(\p)}(\q)$ for all $\q \in U(\p)$. 
\end{enumerate}
\end{theorem}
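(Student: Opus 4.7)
The plan is to recognize that condition (1) is nothing more than a direct restatement of the membership $M\in\R(\p)$, according to the very definition of this subcategory. So the theorem reduces to characterizing $\R(\p)$ in the depth-inequality form appearing in (2), but with the quantifier restricted to the subset $U(\p)\subseteq\Spec(R)$ rather than all of $\Spec(R)$.

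First I would invoke Takahashi's classification (Theorem \ref{T-theorem}) applied to the dominant resolving subcategory $\R(\p)$, which gives
\[ \R(\p)=\{\,M\in\Rmod \mid \depth\, R_{\q}-\depth\, M_{\q}\leqq f_{\R(\p)}(\q)\text{ for all }\q\in\Spec(R)\,\}. \]
This already yields the equivalence (1)$\Leftrightarrow$(2) if one quantifies over all primes, but condition (2) restricts to $\q\in U(\p)$, so the real task is to show that the inequalities over $\q\in\Spec(R)\setminus U(\p)$ are automatic and can be dropped.

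To drop them, I would apply Lemma \ref{lemma-basic} with $\X=\R(\p)$ and with the subset
\[ S=\Spec(R)\setminus U(\p). \]
The hypothesis of Lemma \ref{lemma-basic} requires $R/\q\in\R(\p)$ for every $\q\in S$, which is exactly the content of Proposition \ref{proposition-R} (2). Once Lemma \ref{lemma-basic} is applied, we obtain
\[ \R(\p)=\{\,M\in\Rmod \mid \depth\, R_{\q}-\depth\, M_{\q}\leqq f_{\R(\p)}(\q)\text{ for all }\q\in U(\p)\,\}, \]
and comparing with the definition of $\R(\p)$ (which encodes condition (1)) finishes the proof.

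There is no real obstacle here beyond packaging: the dominance of $\R(\p)$ is assumed in the hypothesis, and all the technical work has already been done in Proposition \ref{proposition-R} and Lemma \ref{lemma-basic}. The only care needed is verifying that the input to Lemma \ref{lemma-basic} is legitimate, i.e.\ that $S=\Spec(R)\setminus U(\p)$ indeed sits inside $\{\q\in\Spec(R)\mid R/\q\in\R(\p)\}$, which is immediate from Proposition \ref{proposition-R} (2).
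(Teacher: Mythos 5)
Your proposal is correct and follows essentially the same route as the paper: both treat condition (1) as literal membership in $\R(\p)$, both invoke Proposition \ref{proposition-R} (2) to see that $\Spec(R)\setminus U(\p)$ sits inside $\{\q\mid R/\q\in\R(\p)\}$, and both then apply Lemma \ref{lemma-basic} with $S=\Spec(R)\setminus U(\p)$ to drop the inequalities outside $U(\p)$. The paper even states the result as ``an immediate consequence of Lemma \ref{lemma-basic}'' with no separate proof, so your packaging matches exactly.
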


%10
Theorem \ref{essential-theorem} reveals that Takahashi's classification theorem \cite[Theorem 5.4]{T-2020} suggests the following conclusion: the questions below have a closed relationship with the existence of a dominant annihilator of local cohomology modules. 

\begin{Question}\setlength{\leftmargini}{18pt} 
Let $\p$ be a prime ideal of $R$. 
\begin{enumerate}
\item When is the resolving subcategory $\R(\p)$ dominant? 

%-----
\item What value does $f_{\R(\p)}(\q)$ take for each $\q \in U(\p)$? 
\end{enumerate}
\end{Question}

Proposition \ref{proposition-R} (4) (respectively, (5)) states that the above question (1) has an affirmative answer when $R$ is a finite-dimensional ring (respectively, a Cohen-Macaulay ring). 
Regarding the above question (2), the next section will give an answer when $R$ has a finite dimension under certain assumptions. 
Furthermore, the last section will provide a complete answer when $R$ is a Cohen-Macaulay ring.

%60
%==================================================================
%=
%=
%=   section :finite-dimensional case 
%=
%=
%==================================================================
\section{The case of finite-dimensional rings}\label{finite-dimensional}
The purpose of this section is to establish annihilation results for local cohomology modules over a finite-dimensional ring with certain assumptions.

%10
%------
In the case when a prime ideal $\p$ of a finite-dimensional ring $R$ has the equality $\depth\, R_{\q}=\grade(\q, R)$ for all $\q \in U(\p)$, we can describe a condition for the existence of an annihilator of local cohomology modules without using the notion of maps in $\Fb(\Rmod)$. 

\begin{theorem}\label{theorem-FD}\setlength{\leftmargini}{18pt} 
Let $\p$ be a prime ideal of $R$ with $\depth\, R_{\q}=\grade(\q, R)$ for all  $\q \in U(\p)$. 
Suppose that the resolving subcategory $\R(\p)$ is dominant (e.g., the ring $R$ has a finite dimension). 
Then, for a finitely generated $R$-module $M$, the following conditions are equivalent. 
\begin{enumerate}
\item  There exists an element $s \in R \setminus \p$ such that $sH_{\fa}^{i}(M)=0$ for all ideals $\fa$ of $R$ and all integers $i < \grade(\fa, R)$. 

%-----
\item One has $\sup_{\q \in U(\p)} \{ \depth\, R_{\q} - \depth\, M_{\q}\} \leqq 0$.  
\end{enumerate}
\end{theorem}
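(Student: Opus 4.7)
The plan is to apply Theorem \ref{essential-theorem} directly and exploit the hypothesis $\depth\, R_{\q} = \grade(\q, R)$ to pin down the values of the classifying map $f_{\R(\p)}$ on $U(\p)$. Since $\R(\p)$ is assumed to be dominant, Theorem \ref{essential-theorem} tells us that condition (1) is equivalent to the inequality $\depth\, R_{\q} - \depth\, M_{\q} \leqq f_{\R(\p)}(\q)$ for every $\q \in U(\p)$. So the equivalence with (2) reduces to verifying that $f_{\R(\p)}(\q) = 0$ for every $\q \in U(\p)$; once this is in hand, the criterion becomes $\depth\, R_{\q} - \depth\, M_{\q} \leqq 0$ for all $\q \in U(\p)$, which is precisely (2).

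To establish this vanishing, I would combine Proposition \ref{proposition-R} (1) with Lemma \ref{lemma-range} (1). Fix $\q \in U(\p)$. Proposition \ref{proposition-R} (1) shows that $\R(\p) \subseteqq \{ M \in \Rmod \mid \depth\, M_{\q} \geqq \grade(\q, R) \}$, and the hypothesis $\grade(\q, R) = \depth\, R_{\q}$ turns this into $\R(\p) \subseteqq \{ M \in \Rmod \mid \depth\, M_{\q} \geqq \depth\, R_{\q} \}$. Taking $n(\q) = \depth\, R_{\q}$ in Lemma \ref{lemma-range} (1), we obtain
\[ 0 \leqq f_{\R(\p)}(\q) \leqq \depth\, R_{\q} - \depth\, R_{\q} = 0, \]
and therefore $f_{\R(\p)}(\q) = 0$, as required.

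Plugging this back into Theorem \ref{essential-theorem} finishes the proof, and the parenthetical remark in the statement is handled by Proposition \ref{proposition-R} (4), which guarantees that $\R(\p)$ is dominant whenever $R$ has finite dimension. I do not expect a serious obstacle here: the argument is essentially a translation of the classification machinery already set up, and the only real content is recognising Lemma \ref{lemma-range} (1) as the right instrument for forcing $f_{\R(\p)}$ to vanish on the generalisation-closed set $U(\p)$ under the depth-equals-grade hypothesis.
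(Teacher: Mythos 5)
Your proposal is correct and follows essentially the same route as the paper: invoking Theorem \ref{essential-theorem} to reduce to the classifying map $f_{\R(\p)}$, then combining Proposition \ref{proposition-R} (1) with the depth-equals-grade hypothesis and Lemma \ref{lemma-range} (1) to force $f_{\R(\p)}(\q)=0$ on $U(\p)$. The paper simply phrases the conclusion as an equality of subcategories rather than a direct equivalence of conditions, but the content is identical.
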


\begin{proof}
Note that finite-dimensional rings have the dominant resolving subcategory $\R(\p)$ by Proposition \ref{proposition-R} (4). 

Let $R$ be a (not necessarily finite-dimensional) ring such that the resolving subcategory $\R(\p)$ is dominant. 
Then \cite[Theorem 5.4]{T-2020} gives the map $f \in \Fb(\Rmod)$ corresponding to the subcategory $\R(\p)$. 

We will show that the equality $f(\q)=0$ holds for each $\q \in U(\p)$. 
Indeed, Proposition \ref{proposition-R} (1) and our assumption yield the inclusion relation  
\[ \R(\p) \subseteqq \{ M \in \Rmod \mid \depth\, M_{\q} \geqq \depth\, R_{\q} \}.\] 
Thus, we can deduce from Lemma \ref{lemma-range} (1) that $0 \leqq f(\q)=\depth\, R_{\q} - \depth\, R_{\q}=0$. 

Theorem \ref{essential-theorem} establishes the equalities 
\begin{align*}
\R(\p) &=\{M \in \Rmod \mid \depth\, R_{\q} -\depth\, M_{\q} \leqq f(\q) \text{ for all } \q \in U(\p)\} &\\ 
&=\{M \in \Rmod \mid \depth\, R_{\q} -\depth\, M_{\q} \leqq 0 \text{ for all } \q \in U(\p) \}. 
\end{align*} 
These equalities mean that an $R$-module $M$ is in the subcategory $\R(\p)$ if and only if one has $\sup_{\q \in U(\p)} \{ \depth\, R_{\q} - \depth\, M_{\q}\} \leqq 0$. 
\end{proof}

%10
%------
\begin{remark}\label{remark-FD}\setlength{\leftmargini}{18pt} 
Let $\p$ be a prime ideal of $R$.
\begin{enumerate}
\item In the preceding section, we talked about the question for the value of $f_{\R(\p)}(\q)$ for each $\q \in U(\p)$. 
As in the proof for Theorem \ref{theorem-FD}, if $R$ is a finite-dimensional ring and $\p$ satisfies the equality  $\depth\, R_{\q}=\grade(\q, R)$ for all $\q \in U(\p)$, then one has the value $f_{\R(\p)}(\q)=0$ for each $\q \in U(\p)$.  

%-----
\item A ring $R$ is said to be {\it almost Cohen-Macaulay} if it has the {\it Cohen-Macaulay defect} $\mathrm{cmd}\, R=\sup_{\q \in \Spec(R)}\{ \dim R_{\q} - \depth\, R_{\q} \} \leqq 1$. 
The characterization \cite[Lemma 3.1]{CFF-2002} states that a ring $R$ is almost Cohen-Macaulay if and only if one has $\depth\, R_{\q}=\grade (\q, R)$ for all $\q \in \Spec(R)$. 

%-----
\item An ideal $I$ of $R$ is called {\it perfect} if the projective dimension of the $R$-module $R/I$ is equal to $\grade (I, R)$; see \cite[Definition 1.4.15]{BH-1998}. 
In the case when a prime ideal $\q$ of $R$ is perfect, one has $\depth\, R_{\q}=\grade(\q, R)$; see \cite[Proposition 1.4.16]{BH-1998}. 
%具体例は, Question 1.4.27. 

%-----
\item For a finitely generated $R$-module $M$, the {\it large restricted flat dimension} $\mathrm{Rfd}_{R}\, M$ is defined by $\sup_{\q \in \Spec(R)}\{ \depth\, R_{\q} - \depth\, M_{\q} \}$. 
The following inequalities are an immediate consequence of \cite[Proposition 1.2.12]{BH-1998}:  
\[\sup_{\q \in U(\p)}\{ \depth\, R_{\q} - \depth\, M_{\q}\} \leqq \min\{\dim R_{\p}, \mathrm{Rfd}_{R}\, M \} \leqq \dim R. \]
See also \cite[Proposition 2.2 and Theorem 2.4]{CFF-2002}. 
\end{enumerate}
\end{remark}

%10
%---------
Theorem \ref{theorem-FD} concludes that the discussion now turns to the existence of annihilator of local cohomology modules for a finitely generated $R$-module $M$ with $\sup_{\q \in U(\p)}\{ \depth\, R_{\q} -\depth\, M_{\q}\}> 0$. 
Theorem \ref{theorem-FD} describes the following corollary as an answer to the discussion about such modules. 

\begin{corollary}\label{corollary-FD}
Let $\p$ be a prime ideal of $R$ with $\depth\, R_{\q}=\grade(\q, R)$ for all  $\q \in U(\p)$. 
Suppose that the resolving subcategory $\R(\p)$ is dominant (e.g., the ring $R$ has a finite dimension). 
Let $M$ be a finitely generated $R$-module with $\sup_{\q \in U(\p)}\{ \depth\, R_{\q} - \depth\, M_{\q}\} \geqq 0$. 
Then there exists an element $s \in R \setminus \p$ such that 
\[ sH^{i}_{\fa}(M)=0 \]
for all ideals $\fa$ of $R$ and all integers $i<\grade(\fa, R) - \sup_{\q \in U(\p)}\{ \depth\, R_{\q} - \depth\, M_{\q}\}$. 
\end{corollary}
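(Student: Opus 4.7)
The plan is to reduce the statement to Theorem \ref{theorem-FD} by replacing $M$ with a sufficiently high syzygy. Set $k = \sup_{\q \in U(\p)}\{ \depth\, R_{\q} - \depth\, M_{\q}\}$, which is a non-negative integer by hypothesis and finite since $k \leqq \dim R_{\p} < \infty$ by Remark \ref{remark-FD} (4). Let $N = \syz_{R}^{k} M$ be a $k$-th syzygy of $M$, fitting into short exact sequences $0 \to \syz_{R}^{j+1} M \to F_{j} \to \syz_{R}^{j} M \to 0$ with each $F_{j}$ finitely generated projective.

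First I would verify that $N$ satisfies the hypothesis of Theorem \ref{theorem-FD}, namely that $\sup_{\q \in U(\p)}\{\depth\, R_{\q} - \depth\, N_{\q}\} \leqq 0$. For each $\q \in U(\p)$, the depth lemma applied to the above short exact sequences yields $\depth\,(\syz_{R}^{j+1} M)_{\q} \geqq \min\{\depth\, R_{\q},\, \depth\,(\syz_{R}^{j} M)_{\q}+1\}$. Iterating $k$ times, and using $k \geqq \depth\, R_{\q} - \depth\, M_{\q}$, forces $\depth\, N_{\q} \geqq \depth\, R_{\q}$ at every $\q \in U(\p)$ (the convention $\depth\, 0 = \infty$ absorbing any vanishing localized syzygies). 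Theorem \ref{theorem-FD} then furnishes an element $s \in R \setminus \p$ such that $sH_{\fa}^{j}(N) = 0$ for every ideal $\fa$ of $R$ and every integer $j < \grade(\fa, R)$.

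To pull this annihilation result back to $M$, I would feed the same syzygy sequences into local cohomology: since each $F_{j}$ is projective, $H^{j}_{\fa}(F_{j}) = 0$ whenever $j < \grade(\fa, R)$, so the connecting homomorphisms supply natural injections
\[ H_{\fa}^{i}(M) \hookrightarrow H_{\fa}^{i+1}(\syz_{R} M) \hookrightarrow \cdots \hookrightarrow H_{\fa}^{i+k}(N) \]
as long as each index $i, i+1, \ldots, i+k-1$ lies strictly below $\grade(\fa, R)$. Under the assumption $i < \grade(\fa, R) - k$ this is automatic, and moreover $i+k < \grade(\fa, R)$, so $sH_{\fa}^{i+k}(N) = 0$ by the choice of $s$, whence $sH_{\fa}^{i}(M) = 0$ via the chain of injections.

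The only substantive obstacle is the depth computation in the reduction step; once the depths of the syzygies are under control, the invocation of Theorem \ref{theorem-FD} and the long exact sequence comparison are routine. The edge cases — vanishing localized syzygies, and the precise range of indices for which the chain of injections is valid — are handled respectively by the $\depth\, 0 = \infty$ convention and by the strict inequality $i < \grade(\fa, R) - k$ appearing in the conclusion.
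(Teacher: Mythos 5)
Your proof is correct and follows essentially the same route as the paper's: set $k=\sup_{\q\in U(\p)}\{\depth R_{\q}-\depth M_{\q}\}$, use the depth lemma iteratively to show $\syz_R^k M$ falls under Theorem \ref{theorem-FD}, obtain $s$ there, and pull the annihilation back to $M$ through the long exact sequences of local cohomology attached to the syzygy sequences, using that $H^j_{\fa}$ of a projective vanishes for $j<\grade(\fa,R)$. The only cosmetic difference is that the paper treats the cases $k=0$ and $\grade(\fa,R)-k\leqq 0$ explicitly, while you absorb them via the conventions on negative-degree local cohomology and the empty chain of injections, which is fine.
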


\begin{proof}
We have the inequalities $\sup_{\q \in U(\p)}\{ \depth\, R_{\q} - \depth\, M_{\q}\} \leqq \dim R_{\p}< \infty$ by Remark \ref{remark-FD} (4). 
Let $n=\sup_{\q \in U(\p)}\{ \depth\, R_{\q} - \depth\, M_{\q}\}$. 
Our assumption guarantees that $n$ is a non-negative integer. 

%-----t=0
In the case when $n=0$, our assertion is immediately from Theorem \ref{theorem-FD}. 

%-----t\geqq 1
Now suppose that $n \geqq 1$. 
Since each prime ideal $\q$ in $U(\p)$ has $\depth\, R_{\q} \leqq \depth\, M_{\q}+n$, the depth lemma \cite[Proposition 1.2.9]{BH-1998} yields the inequalities and the equality 
\begin{align*}
\depth \left( (\syz^{n}_{R} M)_{\q} \right) 
&\geqq \min\{\depth\, R_{\q}, \depth \left(\syz_{R}^{n-1} M\right)_{\q}+1\} 
\geqq \cdots &\\
&\geqq \min\{\depth\, R_{\q}, \depth\, M_{\q}+n \}
= \depth\, R_{\q}. 
\end{align*}
Therefore, one has the inequality $\sup_{\q \in U(\p)}\{ \depth\, R_{\q} - \depth \left( (\syz^{n}_{R} M)_{\q} \right) \} \leqq 0$. 
We now apply Theorem \ref{theorem-FD} to the module $\syz^{n}_{R} M$. 
Then there exists an element $s \in R \setminus \p$ such that $s H^{i}_{\fa}(\syz_{R}^{n} M)=0$ for all ideals $\fa$ of $R$ and all integers $i<\grade(\fa, R)$. 

Our aim is to show that the above element $s$ yields $sH^{i}_{\fa}(M)=0$ for all ideals $\fa$ of $R$ and all integers $i<\grade(\fa, R)-n$. 
To prove our assertion, we now fix an ideal $\fa$ of $R$. 

We suppose that one has $\grade(\fa, R)-n \leqq 0$. 
Since the local cohomology functor $H^{i}_{\fa}(-)$ is the zero functor for all integers $i<0$, we can achieve the equality $sH^{i}_{\fa}(M)=0$ for all integers $i<\grade(\fa, R)-n$. 

Next suppose that we have $\grade(\fa, R)-n \geqq 1$. 
It should be noted that one has the inequality $\grade(\fa, R)\geqq 2$. 
A projective resolution 
\[ \cdots \to P_{k} \to \cdots \to P_{1} \to P_{0} \to M \to 0 \]
of the $R$-module $M$ provides short exact sequences 
\[ H^{i-1}_{\fa}(P_{k-1}) \to H^{i-1}_{\fa}(\syz_{R}^{k-1}M) \to H^{i}_{\fa}(\syz_{R}^{k}M) \to H^{i}_{\fa}(P_{k-1})\]
for all positive integers $i$ and $k$. 

Since the $R$-module $P_{k-1}$ is projective, or a direct summand of a free module, \cite[Theorem 6.2.7]{BS-1998} yields $H^{i}_{\fa}(P_{k-1})=0$ for all integers $i<\grade(\fa, R)$. 
Consequently, the above element $s$ provides that 
\[ sH^{i}_{\fa}(M) \cong sH^{i+1}_{\fa}(\syz_{R} M) \cong sH^{i+n}_{\fa}(\syz^{n}_{R} M)=0\] 
for all integers $i$ with $i+n<\grade(\fa, R)$. 
%(We deduce from $\grade(\fa, R)\geqq 2$ that $H^{0}_{\fa}(M) \cong H^{1}_{\fa}(\syz_{R} M)$.) 
The proof of our corollary is completed. 
\end{proof}

%60
%==================================================================
%=
%=
%=   section :the case of Cohen-Macaulay rings
%=
%=
%==================================================================
\section{The case of Cohen-Macaulay rings}\label{Cohen-Macaulay}
This section will investigate the existence of annihilators of local cohomology modules over a Cohen-Macaulay ring. 
In particular, our purpose is to verify that Cohen-Macaulay rings establish results similar to Theorem \ref{theorem-FD} and Corollary \ref{corollary-FD}.

%10
%-----
We begin with the following easy lemma. 

\begin{lemma}\label{lemma-CM}
We suppose that $R$ is a Cohen-Macaulay ring. 
Let $\p \in \Spec(R)$, and let $M$ be a finitely generated $R$-module with $\depth\, M_{\p} \geqq \depth\,  R_{\p}$. 
Then one has $\depth\, M_{\q} \geqq \depth\, R_{\q}$ for all $\q \in U(\p)$. 
\end{lemma}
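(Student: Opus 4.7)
The plan is to reduce the statement, via localization at $\p$, to the standard fact that maximal Cohen-Macaulayness is preserved by further localization over a Cohen-Macaulay local ring. Concretely, since $R$ is Cohen-Macaulay, $R_{\p}$ is a Cohen-Macaulay local ring with $\depth\, R_{\p} = \dim R_{\p}$, and the hypothesis $\depth\, M_{\p} \geqq \depth\, R_{\p}$ says precisely that $M_{\p}$ is a maximal Cohen-Macaulay $R_{\p}$-module (including the zero module under the convention fixed in Section~\ref{Preliminaries}). For $\q \in U(\p)$ one has $M_{\q} = (M_{\p})_{\q R_{\p}}$ and $R_{\q} = (R_{\p})_{\q R_{\p}}$, together with $\depth\, R_{\q} = \dim R_{\q} = \height\, \q R_{\p}$. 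Hence it suffices to show that $M_{\q}$ is a maximal Cohen-Macaulay $R_{\q}$-module.

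For this I would set $t = \height\, \q R_{\p}$ and argue by producing an explicit regular sequence. Inside the Cohen-Macaulay local ring $R_{\p}$, the prime $\q R_{\p}$ has height $t$, and repeated prime avoidance in the Cohen-Macaulay quotients $R_{\p}/(a_1, \ldots, a_i)$, whose minimal primes all have the same height $i$, yields an $R_{\p}$-regular sequence $a_1, \ldots, a_t$ contained in $\q R_{\p}$. Next I would invoke the well-known rigidity property of maximal Cohen-Macaulay modules over Cohen-Macaulay local rings: every $R_{\p}$-regular sequence of length at most $\dim R_{\p}$ is also $M_{\p}$-regular. This turns $a_1, \ldots, a_t$ into an $M_{\p}$-regular sequence. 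Finally, because these elements lie in $\q R_{\p}$, localizing at $\q R_{\p}$ preserves the regular-sequence property, so $a_1/1, \ldots, a_t/1 \in \q R_{\q}$ is an $M_{\q}$-regular sequence whenever $M_{\q} \neq 0$, yielding $\depth\, M_{\q} \geqq t = \depth\, R_{\q}$. The case $M_{\q} = 0$ is immediate from the convention $\depth\, 0 = \infty$.

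The main obstacle is the step that transfers $R_{\p}$-regularity to $M_{\p}$-regularity: this is the only place where the maximal Cohen-Macaulay hypothesis on $M_{\p}$ is truly used, and it is what forces both the Cohen-Macaulayness of $R$ and the depth inequality on $M_{\p}$ into the hypotheses of the lemma. Everything else is a matter of bookkeeping with heights, depths, and localizations of regular sequences.
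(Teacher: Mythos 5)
Your proof is correct, but it takes a genuinely different route from the paper's. The paper's argument is a one-line chain of inequalities: it invokes the depth inequality $\depth M_{\q} \geqq \depth M_{\p} - \height\,\p/\q$ (from \cite[Lemma 9.3.2]{BS-1998} or \cite[Lemma 6.2]{T-2020}), plugs in the hypothesis, and then uses the Cohen--Macaulayness of $R$ to rewrite $\depth R_{\p} - \height\,\p/\q = \dim R_{\p} - (\height\,\p - \height\,\q) = \dim R_{\q} = \depth R_{\q}$. You instead localize at $\p$, recast the claim as ``maximal Cohen--Macaulay modules over a Cohen--Macaulay local ring remain maximal Cohen--Macaulay under further localization,'' and then re-derive that standard fact directly: build a maximal $R_{\p}$-regular sequence in $\q R_{\p}$ by prime avoidance, observe that an $R_{\p}$-regular sequence is automatically $M_{\p}$-regular when $M_{\p}$ is maximal Cohen--Macaulay (because $\mathrm{Ass}(M_{\p}) \subseteqq \Min(R_{\p}) = \mathrm{Ass}(R_{\p})$), and localize the regular sequence at $\q R_{\p}$. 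Both approaches are valid; the paper's is shorter because it leans on a cited depth inequality, whereas yours is more elementary and self-contained modulo the regular-sequence rigidity fact, which you assert rather than prove but which is indeed a standard consequence of the unmixedness of Cohen--Macaulay modules. Either route could be substituted for the other without affecting anything downstream.
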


\begin{proof}
Let $\q$ be a prime ideal of $R$ with $\q \subseteqq \p$.  
Since $R$ is a Cohen-Macaulay ring, the module $M$ satisfies the inequalities and the equalities 
\begin{align*}
\depth\, M_{\q} 
&\geqq \depth\, M_{\p}-\height\, \p /\q 
\geqq \depth\, R_{\p}-\height\, \p /\q &\\ 
&=\dim R_{\p} -(\height\p-\height \q)
=\dim R_{\q}=\depth\, R_{\q} 
\end{align*}
by \cite[Lemma 9.3.2]{BS-1998} or \cite[Lemma 6.2]{T-2020}.  
\end{proof}

%10
%-----------
We now achieve the following conclusion about when local cohomology modules over a Cohen-Macaulay ring have an annihilator that does not depend on the choice of the ideal. 
The result below removes from \cite[Theorem 4.3]{Y-2020} the assumption that the base ring has a finite dimension.  

\begin{theorem}\label{theorem-CM}\setlength{\leftmargini}{18pt}
Suppose that $R$ is a Cohen-Macaulay ring, and let $\p \in \Spec(R)$. 
Then, for a finitely generated $R$-module $M$, the following conditions are equivalent.  
\begin{enumerate}
\item  There exists an element $s \in R \setminus \p$ such that $sH_{\fa}^{i}(M)=0$ for all ideals $\fa$ of $R$ and all integers $i < \grade(\fa, R)$.   

%-----
\item The $R_{\p}$-module $M_{\p}$ is a maximal Cohen-Macaulay $R_{\p}$-module. 
\end{enumerate}
\end{theorem}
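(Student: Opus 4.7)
The plan is to derive Theorem \ref{theorem-CM} as a direct specialization of Theorem \ref{theorem-FD}. Since $R$ is Cohen-Macaulay, Proposition \ref{proposition-R} (5) ensures that the resolving subcategory $\R(\p)$ is dominant, so the map $f_{\R(\p)}\in\Fb(\Rmod)$ provided by Theorem \ref{T-theorem} is at our disposal. Moreover, Cohen-Macaulayness gives $\depth R_{\q}=\dim R_{\q}=\height\,\q=\grade(\q, R)$ for every $\q\in\Spec(R)$, so the hypothesis $\depth R_{\q}=\grade(\q, R)$ for all $\q\in U(\p)$ required by Theorem \ref{theorem-FD} is automatically satisfied.

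First I would apply Theorem \ref{theorem-FD} directly: condition (1) is equivalent to the inequality
\[ \sup_{\q\in U(\p)}\{\depth R_{\q}-\depth M_{\q}\}\leqq 0, \]
that is, to $\depth M_{\q}\geqq\depth R_{\q}$ for every $\q\in U(\p)$. Because $R$ is Cohen-Macaulay, $\depth R_{\q}=\dim R_{\q}$, and this rewrites as $\depth M_{\q}\geqq\dim R_{\q}$ for all $\q\in U(\p)$.

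Second, I translate this family of inequalities into a statement about $M_{\p}$ over $R_{\p}$. The primes of $R_{\p}$ are in bijection with $U(\p)$ via $\q\mapsto\q R_{\p}$, and localization preserves depth and dimension, so $\depth (M_{\p})_{\q R_{\p}}=\depth M_{\q}$ and $\dim (R_{\p})_{\q R_{\p}}=\dim R_{\q}$. Hence the family of inequalities obtained above is precisely the definition of $M_{\p}$ being a maximal Cohen-Macaulay $R_{\p}$-module, giving (1)$\Leftrightarrow$(2).

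Since the heavy lifting (the classification theorem and Theorem \ref{essential-theorem}) has already been set up, I do not expect any serious obstacle. The only subtlety is bookkeeping: verifying that condition (2), which a priori demands a depth inequality at every prime $\q\in U(\p)$, is in fact equivalent to the single local condition $\depth M_{\p}\geqq\dim R_{\p}$. This reduction is exactly the content of Lemma \ref{lemma-CM}, which I would invoke at the end to package (2) into its customary one-prime form, although it is not strictly needed for the logical equivalence with (1).
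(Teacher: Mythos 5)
Your proof is correct, and it matches the route the paper itself sketches in Remark~\ref{remark-CM}~(2): derive Theorem~\ref{theorem-CM} by specializing Theorem~\ref{theorem-FD}, noting that Proposition~\ref{proposition-R}~(5) supplies dominance of $\R(\p)$ and Cohen-Macaulayness supplies $\depth R_{\q}=\grade(\q,R)$ for all $\q$. The paper's own proof of Theorem~\ref{theorem-CM} instead re-runs the argument of Theorem~\ref{theorem-FD} inline (showing $f(\q)=0$ on $U(\p)$ via Lemma~\ref{lemma-range} and then invoking Theorem~\ref{essential-theorem}), and it closes by reducing the family of inequalities over $U(\p)$ to the single inequality $\depth M_{\p}\geqq\dim R_{\p}$ via Lemma~\ref{lemma-CM}. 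Your final step is slightly leaner: you bypass both Lemma~\ref{lemma-CM} and the CM-dimension equalities from \cite{G-2001} used in the remark, observing that the condition ``$\depth M_{\q}\geqq\dim R_{\q}$ for all $\q\in U(\p)$'' is already, verbatim under the $\Spec(R_{\p})\leftrightarrow U(\p)$ bijection, the paper's stated definition of $M_{\p}$ being maximal Cohen-Macaulay over $R_{\p}$. The two routes are mathematically equivalent; yours buys a shorter chain of citations, while the paper's inline derivation makes the value $f_{\R(\p)}(\q)=0$ explicit, which is what feeds Remark~\ref{remark-CM}~(1).
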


\begin{proof}
Note that the Cohen-Macaulay ring $R$ has the dominant resolving subcategory $\R(\p)$ by Proposition \ref{proposition-R} (5). 
Hence, \cite[Theorem 5.4]{T-2020} gives the map $f \in \Fb(\Rmod)$ corresponding to the subcategory $\R(\p)$. 

We claim that the equality $f(\q)=0$ holds for each $\q \in U(\p)$. 
Indeed, since $R$ is a Cohen-Macaulay ring, \cite[Theorem 2.1.3]{BH-1998} yields the equality $\depth\,  R_{\q}=\grade(\q, R)$.
It therefore follows from Proposition \ref{proposition-R} (1) that 
\[ \R(\p) \subseteqq \{ M \in \Rmod \mid \depth\, M_{\q} \geqq \depth\, R_{\q} \}.\] 
Applying Lemma \ref{lemma-range}, we obtain $0 \leqq f(\q)=\depth\, R_{\q} - \depth\, R_{\q}=0$. 

We now establish the equalities 
\begin{align*}
\R(\p)&=\{M \in \Rmod \mid \depth\, R_{\q} -\depth\, M_{\q} \leqq f(\q) \text{ for all } \q \in U(\p)\} &\\ 
&=\{M \in \Rmod \mid \depth\, R_{\q} -\depth\, M_{\q} \leqq 0 \text{ for all } \q \in U(\p)\} \\
&=\{ M \in \Rmod \mid \depth\, M_{\p} \geqq \depth\, R_{\p}\} \\
&=\{ M \in \Rmod \mid \depth\, M_{\p} \geqq \dim R_{\p} \}
\end{align*} 
using Theorem \ref{essential-theorem}, the above claim, Lemma \ref{lemma-CM}, and the Cohen-Macaulayness for the ring $R$, respectively. 
Consequently, we can conclude that an $R$-module $M$ is in the subcategory $\R(\p)$ if and only if an $R_{\p}$-module $M_{\p}$ is a maximal Cohen-Macaulay $R_{\p}$-module. 
\end{proof}

%10
%-----
\begin{remark}\label{remark-CM}\setlength{\leftmargini}{18pt}
Let  $\p$ be a prime ideal of $R$.  
\begin{enumerate}
\item When $R$ is a Cohen-Macaulay ring, the proof for Theorem \ref{theorem-CM} presents the complete answer to the question (2) in section \ref{NASC}.
Indeed, with the notation of Theorem \ref{essential-theorem}, we have already established the equality $f_{\R(\p)}(\q)=0$ for each $\q \in U(\p)$. 

%-----
\item We can regard Theorem \ref{theorem-CM} as an immediate consequence of Theorem \ref{theorem-FD}. 
To prove this fact, we now suppose that $R$ is a Cohen-Macaulay ring. 
By \cite[Theorems 3.8, 3.9, and Proposition 3.10]{G-2001}, a finitely generated $R$-module $M$ has the equalities 
\begin{align*}
\sup_{\q \in U(\p)}\{ \depth\, R_{\q} - \depth\, M_{\q} \} 
&= \sup_{\q \in U(\p)} \{ \CMd_{R_{\q}}\, M_{\q} \} \\
&=\CMd_{R_{\p}}\, M_{\p} \\
&=\depth\, R_{\p} - \depth\, M_{\p} \\
&=\dim R_{\p} - \depth\, M_{\p}.  
\end{align*}
Note that, for each $\q \in \Spec(R)$, one has $\depth\, M_{\q}=\infty$ if and only if $M_{\q}=0$ if and only if $\CMd_{R_{\q}}\, M_{\q}=-\infty$.  
\end{enumerate}
\end{remark}

%10
%-----------
The corollary below is a generalization of \cite[Corollary 4.6]{Y-2020}, which guarantees the existence of annihilators of local cohomology modules for finitely generated modules over a finite-dimensional Cohen-Macaulay ring. 
Using Theorem \ref{theorem-CM}, the same proof for \cite[Corollary 4.6]{Y-2020} works without the assumption that the base ring has a finite dimension. 

\begin{corollary}\label{corollary-CM}
Suppose that $R$ is a Cohen-Macaulay ring, and let $\p \in \Spec(R)$. 
Then, for a finitely generated $R$-module $M$, there exists an element $s \in R \setminus \p$ such that 
\[ sH^{i}_{\fa}(M)=0 \]
for all ideals $\fa$ of $R$ and all integers $i<\grade(\fa, R)-\CMd_{R_{\p}}\, M_{\p}$. 
\end{corollary}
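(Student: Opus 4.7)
The plan is to reduce to Theorem \ref{theorem-CM} by passing to a sufficiently high syzygy, in the same spirit as the proof of Corollary \ref{corollary-FD}. Set $n = \CMd_{R_{\p}}\, M_{\p}$. The main conceptual input is Remark \ref{remark-CM} (2), which yields
\[ n = \sup_{\q \in U(\p)}\{\depth\, R_{\q}-\depth\, M_{\q}\} = \dim R_{\p}-\depth\, M_{\p}; \]
this translates the Cohen-Macaulay defect into the depth supremum used in Corollary \ref{corollary-FD}, and shows $0\leqq n\leqq \dim R_{\p}$ whenever $M_{\p}\neq 0$.

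I would dispatch the edge cases first. If $M_{\p}=0$, then $\CMd_{R_{\p}}\, M_{\p}=-\infty$, so the bound on $i$ in the statement becomes $i<\infty$; because $M$ is finitely generated, there exists $s\in R\setminus\p$ with $sM=0$, and this $s$ annihilates $H^{i}_{\fa}(M)$ for every $\fa$ and every $i$. If $n=0$ and $M_{\p}\neq 0$, then $M_{\p}$ is a maximal Cohen-Macaulay $R_{\p}$-module, and Theorem \ref{theorem-CM} immediately produces the required $s$.

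Suppose $n\geqq 1$. I would first verify that $(\syz^{n}_{R}M)_{\p}$ is maximal Cohen-Macaulay over $R_{\p}$. For each $\q\in U(\p)$, the identity above gives $\depth\, R_{\q}\leqq \depth\, M_{\q}+n$, so iterating the depth lemma \cite[Proposition 1.2.9]{BH-1998} yields
\[ \depth\left((\syz^{n}_{R}M)_{\q}\right)\geqq \min\{\depth\, R_{\q},\,\depth\, M_{\q}+n\}=\depth\, R_{\q}, \]
and specializing to $\q=\p$, together with Cohen-Macaulayness, gives $\depth\,(\syz^{n}_{R}M)_{\p}\geqq \dim R_{\p}$. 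Applying Theorem \ref{theorem-CM} to $\syz^{n}_{R}M$ produces an element $s\in R\setminus\p$ with $sH^{i}_{\fa}(\syz^{n}_{R}M)=0$ for every ideal $\fa$ and every $i<\grade(\fa,R)$.

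Finally, one performs the standard dimension shift exactly as in the proof of Corollary \ref{corollary-FD}. A projective resolution $\cdots\to P_{1}\to P_{0}\to M\to 0$ yields long exact sequences of local cohomology, and $H^{j}_{\fa}(P)=0$ for projective $P$ and $j<\grade(\fa,R)$ by \cite[Theorem 6.2.7]{BS-1998}; induction on $k$ thus produces $sH^{i}_{\fa}(M)\cong sH^{i+n}_{\fa}(\syz^{n}_{R}M)=0$ whenever $i+n<\grade(\fa,R)$, which is the desired bound $i<\grade(\fa,R)-n$. I do not foresee any substantive obstacle: the proof is essentially a translation of Corollary \ref{corollary-FD} into the Cohen-Macaulay setting, with Remark \ref{remark-CM} (2) supplying the dictionary between $\CMd_{R_{\p}}\, M_{\p}$ and $\sup_{\q\in U(\p)}\{\depth\, R_{\q}-\depth\, M_{\q}\}$.
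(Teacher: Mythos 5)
Your proof is correct and follows the same route the paper indicates: it is the dimension-shift argument of Corollary \ref{corollary-FD} replayed with Theorem \ref{theorem-CM} in place of Theorem \ref{theorem-FD}, using the equalities of Remark \ref{remark-CM}~(2) to identify $\CMd_{R_{\p}}\,M_{\p}$ with $\sup_{\q\in U(\p)}\{\depth R_{\q}-\depth M_{\q}\}$ (and $\dim R_{\p}-\depth M_{\p}$), plus the separate easy treatment of $M_{\p}=0$. This is precisely what the paper leaves implicit by referring to the proof of \cite[Corollary 4.6]{Y-2020} and by the observation in Remark \ref{remark-CM}~(1), so there is nothing to flag.
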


%10
\begin{remark}\setlength{\leftmargini}{18pt}
Let $R$ be a Cohen-Macaulay ring, let $\p$ be a prime ideal of $R$, and let $M$ be a finitely generated $R$-module.
\begin{enumerate}
\item We can regard Corollary \ref{corollary-CM} as an immediate consequence of Corollary \ref{corollary-FD} by the equalities in Remark \ref{remark-CM} (2). 

Note that, in the case when $\CMd_{R_{\p}}\, M_{\p}< 0$, we have the equality  $\CMd_{R_{\p}}\, M_{\p}=-\infty$. 
Namely, one has $M_{\p}=0$. 
It is easy to see that there exists an element $s \in R \setminus \p$ such that $sM=0$. 
Since local cohomology functors are $R$-linear by \cite[Properties 1.2.2]{BS-1998}, we achieve the equalities $sH^{i}_{\fa}(M)=0$ for all ideals $\fa$ of $R$ and all integers $i$; see also \cite[Lemma 9.4.1]{BS-1998}. 

%-----
\item Let $t$ be a non-negative integer. 
We suppose that $M$ has an element $s \in R \setminus \p$ such that $sH^{i}_{\fa}(M)=0$ for all ideals $\fa$ of $R$ and all integers $i<\grade(\fa, R)-t$. 
Then the following investigation will lead to the inequality $t \geqq \CMd_{R_{\p}} M_{\p}$. 

When the equality $M_{\p}=0$ holds, one has $\CMd_{R_{\p}}\, M_{\p}=-\infty$. Therefore, there is nothing to prove.  

Next suppose that $M_{\p}\neq 0$. 
We now take the prime ideal $\p$ as the above ideal $\fa$. 
The flat base change theorem \cite[Theorem 4.3.2]{BS-1998} yields $H^{i}_{\p R_{\p}}(M_{\p})=0$ for all integers $i <\grade(\p, R)-t$. 
Since Nakayama's lemma guarantees that $\p R_{\p}M_{\p} \neq M_{\p}$, we deduce from \cite[Theorem 6.2.7]{BS-1998} and \cite[Theorem 2.1.3]{BH-1998} that 
\[ \depth\, M_{\p} \geqq \grade(\p, R) - t = \depth\, R_{\p} - t.\] 

Consequently, we can conclude $t \geqq \depth\, R_{\p} - \depth\, M_{\p} = \CMd_{R_{\p}}\, M_{\p}$ by \cite[Theorems 3.8 and 3.9]{G-2001}. 
\end{enumerate}
\end{remark}

%60
\section*{Acknowledgments}
This work was supported by JSPS KAKENHI Grant Number JP20K03549.

%60
\vspace{7pt}
\bibliographystyle{amsplain}

\end{document}